\title{{\bf The Annihilating-Ideal Graph of a Ring}}
\author{{{\bf ${{\rm { {\bf F. ~Aliniaeifard }}}}$\thanks
{ Corresponding author: Farid Aliniaeifard, Department of Mathematics, Brock University, St. Catharines, Ontario,
Canada L2S 3A1. E-mail address: fa11da@brocku.ca},~ ${{\rm {\bf M.~ Behboodi}}}$}}\thanks
{ The research of the second author was  supported in part by a grant from IPM (No. 90160034).} {\rm{\bf~ and}}
${{\rm {\bf Y.~ Li}}}$\thanks
{ The research of the first and the third authors was supported in part by a Discovery Grant from the Natural Sciences and Engineering Research
Council of Canada.}
\thanks{The research of third author was also supported in part by the National Natural Science Foundation of China (No. 11271250).}\\
\\
\footnotesize{${^{\rm }}$}\vspace{-1mm}
}
\def\be{\begin{enumerate}}
\def\ee{\end{enumerate}}
\newtheorem{ttheo}{Theorem}[section]
\newtheorem{ccoro}[ttheo]{Corollary}
\newtheorem{llem}[ttheo]{Lemma}
\newtheorem{example}[ttheo]{Example}
\newtheorem{rrem}[ttheo]{Remark}
\newtheorem{ppro}[ttheo]{Proposition}
\newenvironment{pproof}{\noindent{\bf Proof. }}{}
\date{}
\begin{document}
  \maketitle
\begin{abstract}
{Let $S$ be a semigroup with $0$ and $R$ be a ring with $1$. We extend the definition of the zero-divisor graphs of commutative semigroups to not necessarily commutative semigroups. We define an annihilating-ideal graph of a ring as a special type of zero-divisor graph of a semigroup. We introduce two
ways to define the zero-divisor graphs of  semigroups. The first definition gives a directed graph ${\Gamma}(S)$, and the other definition yields an undirected graph $\overline{{\Gamma}}(S)$. It is shown that $\Gamma(S)$ is not necessarily connected, but $\overline{{\Gamma}}(S)$ is  always connected and ${\rm diam}(\overline{\Gamma}(S))\leq 3$. For a ring $R$  define a directed graph $\Bbb{APOG}(R)$ to be equal to  $\Gamma(\Bbb{IPO}(R))$, where  $\Bbb{IPO}(R)$ is a semigroup consisting of all products of two one-sided ideals of $R$, and define an undirected graph $\overline{\Bbb{APOG}}(R)$ to be equal to  $\overline{\Gamma}(\Bbb{IPO}(R))$. We show that $R$ is an Artinian (resp., Noetherian) ring if and only if $\Bbb{APOG}(R)$ has DCC (resp., ACC) on some special subset of its vertices. Also, It is shown that  $\overline{\Bbb{APOG}}(R)$ is a complete graph if and only if either $(D(R))^{2}=0$, $R$ is a direct product of two division rings, or $R$ is a local ring with maximal ideal $\mathfrak{m}$ such that $\Bbb{IPO}(R)=\{0,\mathfrak{m},\mathfrak{m}^{2}, R\}$. Finally, we investigate the diameter and the girth of square matrix rings over commutative rings $M_{n\times n}(R)$ where $n\geq 2$.  \\
\\
  {\footnotesize{\it\bf Key Words:} Rings; Semigroups; Zero-Divisor Graphs; Annihilating-Ideal Graphs.
  }\\
  {\footnotesize{\bf 2010  Mathematics Subject
  Classification:}   16D10; 16D25; 05C20; 05C12; 13E10; 16P60.}}
\end{abstract}


\section{introduction}

In \cite{Be88}, I. Beck associated to a commutative ring $R$ its zero-divisor graph
$G(R)$ whose vertices are the zero-divisors of $R$ (including $0$), and two distinct vertices
$a$ and  $b$ are adjacent if $ ab = 0$. In \cite{AL99}, Anderson and Livingston introduced and studied the subgraph
$\Gamma(R)$ (of $G(R)$) whose vertices are the nonzero zero-divisors of $R$. This graph turns out
to best exhibit the properties of the set of zero-divisors of $R$, and the ideas and
problems introduced in \cite{AL99} were further studied in \cite{AM04, AB13, ALS03}. In \cite{Re01}, Redmond extended the definition of zero-divisor graph to non-commutative rings. Some fundamental results concerning zero-divisor graph for a
non-commutative ring were given in \cite{AM041,AM042,Wu05}.
For a commutative ring $R$ with $1$, denoted by $\Bbb{A}(R)$, the set of ideals with nonzero annihilator. The annihilating-ideal graph of
$R$ is an undirected graph $\Bbb{AG}(R)$ with vertices $\Bbb{A}(R)^{*} = \Bbb{A}(R)
\setminus \{0\}$, where distinct vertices $I$ and $J$ are
adjacent if $IJ = (0)$. The concept of the
annihilating-ideal graph of a commutative ring was introduced in \cite{BRI11,BRII11}. Several fundamental results concerning $\Bbb{AG}(R)$ for a
commutative ring were given in \cite{AAB,AANS,AAN13,AB12}. For a ring $R$, let $D(R)$ be the set of one-sided zero-divisors of $R$ and $\Bbb{IPO}(R) = \{{\rm {\it A \subseteq R : A = IJ} ~where~ {\it I} ~and ~{\it J} ~are~ left~ or~  right~ ideals~ of}~R\}$. Let $S$ be a semigroup with $0$, and $D(S)$ be the set of one-sided zero-divisors of $S$.
 The zero-divisor graph of a commutative semigroup is an undirected graph with vertices $Z(S)^{*}$ (the set of non-zero zero-divisors) and two distinct vertices $a$ and $b$ are adjacent if $ab=0$. The zero-divisor graph of a commutative semigroup was introduced in \cite{DMS02} and further studied in \cite{DD05,WL07,WL06,WQL09}.
\\

Let $\Gamma$ be a graph. For vertices $x$ and $y$ of  $\Gamma$, let $d(x, y)$ be the length of a shortest path
from $x$ to $y$ ($d(x, x) = 0$ and $d(x, y) = \infty$ if there is
no such a path). The diameter of $\Gamma$ is defined as $ {\rm diam}(\Gamma) = sup\{d(x, y) |$ $x$
and $y$ are vertices of $\Gamma\}$.
The girth of $\Gamma$, denoted by
${\rm gr}(\Gamma)$, is the length of a shortest cycle in $\Gamma$ (${\rm gr}(\Gamma) =
\infty$ if $\Gamma$ contains no cycles).
\\

In Section 2, we introduce a directed graph $\Gamma(S)$ for a semigroup $S$ with 0. We show that  $\Gamma(S)$ is not necessarily connected. Then we find a necessarily and sufficient condition for $\Gamma(S)$ to be connected. After that  we extend the annihilating-ideal graph to a (not necessarily commutative) ring. It is shown that $\Bbb{IPO}(R)$ is a semigroup. We associate to a  ring $R$ a directed graph (denote by $\Bbb{APOG}(R)$) the zero-divisor graph of $\Bbb{IPO}(R)$, i.e.,  $\Bbb{APOG}(R)=\Gamma(\Bbb{IPO}(R))$. Then we show that $R$ is an Artinian (resp., Noetherian) ring if and only if $\Bbb{APOG}(R)$ has DCC (resp., ACC) on some subset of its vertices.
In Section 3, we introduce an undirected graph $\overline{\Gamma}(S)$ for a semigroup $S$ with 0. We show that  $\overline{\Gamma}(S)$ is always connected and ${\rm diam}(\overline{\Gamma}(S))\leq 3$. Moreover, if $\overline{\Gamma}(S)$ contains a cycle, then ${\rm gr}(\overline{\Gamma}(S))\leq 4$. After that  we define an undirected graph which extends the annihilating-ideal graph to a not necessarily commutative ring. We associate to a  ring $R$ an undirected graph (denoted by $\overline{\Bbb{APOG}}(R)$) the undirected zero-divisor graph of $\Bbb{IPO}(R)$, i.e.,  $\overline{\Bbb{APOG}}(R)=\overline{\Gamma}(\Bbb{IPO}(R))$. Finally, we characterize rings whose undirected annihilating-ideal graphs are complete graphs.
In Section 4, we investigate the undirected annihilating-ideal graphs of matrix rings over  commutative rings. It is shown that ${\rm diam}((\overline{\Bbb{APOG}}(M_{n}(R)))\geq 2$ where $n\geq 2$. Also, we show that  ${\rm diam}(\overline{\Bbb{APOG}}(M_{n}(R))\geq {\rm diam}(\overline{\Bbb{APOG}}(R))$.



\section{Directed Annihilating-Ideal Graph of a  Ring}

Let $S$ be a semigroup with $0$ and $D(S)$ denote the set of one-sided zero-divisors of $S$. We associate to $S$ a directed graph $\Gamma(S)$ with vertices set $D(S)^{*}=D(S)\setminus \{0\}$ and $a\rightarrow b$ if $ab=0$. In this section, we investigate the properties of $\Gamma(S)$ and we first show the following result.

\begin{ppro}
Let $R$ be a ring. Then $\Bbb{IPO}(R)$ is a semigroup.
\end{ppro}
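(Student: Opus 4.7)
The goal is to show that $\Bbb{IPO}(R)$ is closed under the usual ideal product $A \cdot B = \{\sum a_ib_i : a_i \in A,\ b_i \in B\}$; associativity of this operation on $\Bbb{IPO}(R)$ is inherited from associativity of subset multiplication in $R$, so closure is the only real content. The plan is to take arbitrary $A = I_1J_1$ and $B = I_2J_2$ in $\Bbb{IPO}(R)$ (each $I_k$, $J_k$ a one-sided ideal of $R$) and rewrite $AB = I_1J_1I_2J_2$ as $KL$ for some one-sided ideals $K, L$, by judiciously grouping the four factors.

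Two elementary stability facts drive everything, and I will record them first. If $L$ is a left ideal of $R$ and $X \subseteq R$ is any subset, then $LX$ is again a left ideal (from $r(\ell x) = (r\ell)x \in LX$); dually, $XN$ is a right ideal whenever $N$ is a right ideal. Iterating, any product of subsets that begins with a left ideal is itself a left ideal, and any product that ends with a right ideal is itself a right ideal. It is also worth noting, though not needed in the main argument, that because $R$ contains $1$ every one-sided ideal $I$ satisfies $RI = I$ or $IR = I$, so every one-sided ideal already lies in $\Bbb{IPO}(R)$.

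The case analysis then splits on the middle pair $J_1, I_2$. If $J_1$ is a left ideal, then $J_1I_2J_2$ is a left ideal and $AB = I_1 \cdot (J_1I_2J_2)$ presents $AB$ as a product of two one-sided ideals. If $I_2$ is a right ideal, then $I_1J_1I_2$ is a right ideal and the grouping $AB = (I_1J_1I_2) \cdot J_2$ does the job. The remaining possibility is that $J_1$ is a right ideal and $I_2$ is a left ideal; but then $I_1J_1$ is itself a right ideal and $I_2J_2$ is itself a left ideal, so the obvious grouping $AB = (I_1J_1) \cdot (I_2J_2)$ already displays $AB$ as a product of two one-sided ideals.

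The only subtle point is precisely this last case: when the middle factors face the wrong way to absorb either endpoint into a one-sided ideal, the same orientation forces the outer pairs $I_1J_1$ and $I_2J_2$ to each be a single one-sided ideal, so the original decompositions of $A$ and $B$ already suffice. Once the three cases are written down $\Bbb{IPO}(R)$ is closed under multiplication, and since it obviously contains $\{0\}$ and $R = R \cdot R$ it is a nonempty semigroup (in fact a semigroup with zero, as is required for the graph-theoretic constructions that follow).
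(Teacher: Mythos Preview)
Your proof is correct and follows essentially the same approach as the paper: a case split on the sidedness of the middle factors $J_1$ and $I_2$, with the four-fold product regrouped accordingly. The paper partitions the cases slightly differently (it treats ``$J_1$ right and ($I_2$ left or $J_2$ right)'' as a single case with the grouping $(I_1J_1)(I_2J_2)$, and isolates ``$J_1$ right, $I_2$ right, $J_2$ left'' for the grouping $(I_1J_1I_2)J_2$), but the underlying idea and the groupings used are the same as yours.
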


\begin{pproof}
 Let $A,B \in \Bbb{IPO}(R)$. Then
there exist left or right ideals $I_{1}, J_{1}, I_{2}, J_{2}$ of $R$ such that
$A=I_{1}J_{1}$ and $B=I_{2}J_{2}$. We show that
$AB=(I_{1}J_{1})(I_{2}J_{2})\in \Bbb{IPO}(R)$.

$Case$ 1:  $J_{1}$ is a left
ideal. Then $AB=I_{1}(J_{1}I_{2}J_{2})
\in \Bbb{IPO}(R)$ (as $J_{1}I_{2}J_{2}$ is a left ideal of $R$).

$Case$ 2: $J_{1}$ is a right ideal and either $I_{2}$ is a left ideal or
$J_{2}$ is a right ideal. Then $AB=(I_{1}J_{1})(I_{2}J_{2})\in
\Bbb{IPO}(R)$.

$Case$ 3: $J_{1}$ is a right ideal, $I_{2}$ is a right ideal, and
$J_{2}$ is a left ideal. Then $AB=(I_{1}J_{1}I_{2})J_{2}\in
\Bbb{IPO}(R)$.

Thus $\Bbb{IPO}(R)$ is closed multiplicatively. Since the multiplication is associative, $\Bbb{IPO}(R)$ is a semigroup.\hfill $\square$
\end{pproof}
\\

 It was shown in \cite[Theorem 1.2]{DMS02} that the zero-divisor graph of a commutative semigroup $S$ is connected and ${\rm diam} (\Gamma(S))\leq 3$ . In the following example we show that $\Gamma(S)$  is not necessarily  connected when $S$ is a non-commutative semigroup.

\begin{example} Let $K$ be a field and $V =
\oplus_{i=1}^{\infty}K$. Then $R = HOM_{K}(V,V)$, under
the point-wise addition and the multiplication taken to be the composition of
functions, is an infinite non-commutative ring with identity.
Let $\pi_{1}: V \rightarrow V$ be defined by $(a_{1},a_{2}, ...)
\mapsto  (a_{1},0, ...)$ and $f : V \rightarrow V$ be defined by
$(a_{1},a_{2}, ...) \mapsto (0, a_{1},a_{2}, ...)$. Then $\pi_{1}, f \in R$. Note that $(R\pi_{1})(fR) = 0 $, so $\Gamma(\Bbb{IPO}(R)) \neq\emptyset$. However, $\Gamma(\Bbb{IPO}(R))$ is not connected as there is no
path leading from the vertex $(fR)$ to any other vertex of
 $\Gamma(\Bbb{IPO}(R))$.  This is because there exists $g : V \rightarrow V$ given by
$(a_{1},a_{2}, ...) \mapsto (a_{2},a_{3}, ...)$ and $g \in R$
such that $gf = 1_{R}$.\hfill $\square$
\end{example}

For a semigroup $S$, let  $$A^{l}(S) =\{a \in D(S)^{*}:~{\rm  there ~exists}~ b \in D(R)^{*}
{\rm ~such~ that~} ba = 0\}$$ and
$$A^{r}(S) = \{a \in D(S)^{*}:~ {\rm there ~exists}~ b \in D(R)^{*}~ {\rm such~ that} ~ab = 0 \}.$$
Next we show that $\Gamma(S)$ is connected if and only if $A^{l}(S)=A^{r}(S)$. Moreover, if $\Gamma(S)$ is connected, then ${\rm diam}(\Gamma(S))\leq 3$.

\begin{ttheo}\label{diam33}
Let $S$ be a  semigroup. Then $\Gamma(S)$ is connected if
and only if $A^{l}(S) = A^{r}(S)$. Moreover, if $\Gamma(S)$ is
connected, then $diam(\Gamma(S)) \leq 3$.
\end{ttheo}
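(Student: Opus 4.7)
The strategy is to reformulate $A^{l}(S)=A^{r}(S)$ as a purely local condition on the vertices of $\Gamma(S)$. A vertex $a\in D(S)^{*}$ has an outgoing edge in $\Gamma(S)$ precisely when some nonzero $x$ satisfies $ax=0$, i.e., when $a\in A^{r}(S)$; similarly, $a$ has an incoming edge precisely when $a\in A^{l}(S)$. Moreover, every $a\in D(S)^{*}$ is by definition either a left or a right zero-divisor, so $D(S)^{*}=A^{l}(S)\cup A^{r}(S)$. Thus $A^{l}(S)=A^{r}(S)$ if and only if every vertex of $\Gamma(S)$ admits both an incoming and an outgoing edge.

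For the forward direction, assume $\Gamma(S)$ is connected, i.e., $d(x,y)<\infty$ for all pairs of vertices. If $\Gamma(S)$ has a single vertex $a$, then any nonzero $c$ with $ac=0$ or $ca=0$ would itself be a vertex, forcing $c=a$; so $a^{2}=0$ and $A^{l}(S)=A^{r}(S)=\{a\}$. Otherwise, for any vertex $a$ pick some other vertex $c$; connectedness supplies directed paths $a\to\cdots\to c$ and $c\to\cdots\to a$, whose first edges witness that $a$ has both an outgoing and an incoming edge. Hence $a\in A^{l}(S)\cap A^{r}(S)$ for every vertex $a$, and therefore $A^{l}(S)=A^{r}(S)$.

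For the converse and the diameter estimate, suppose $A^{l}(S)=A^{r}(S)$ and fix distinct vertices $a,b$. If $ab=0$, then $d(a,b)=1$. Otherwise, since $a\in A^{r}(S)$ and $b\in A^{l}(S)$, choose nonzero $x,y$ with $ax=0$ and $yb=0$. If $xy\neq 0$, the identities $a(xy)=(ax)y=0$ and $(xy)b=x(yb)=0$ show that $xy\in D(S)^{*}$ and give a walk $a\to xy\to b$ of length $2$; here $xy\notin\{a,b\}$ because $xy=a$ or $xy=b$ would force $ab=0$. If $xy=0$, then $a\to x\to y\to b$ is a walk of length $3$. In every case $d(a,b)\le 3$, proving connectedness together with ${\rm diam}(\Gamma(S))\le 3$.

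The main subtlety is not conceptual but lies in the bookkeeping of the converse: one has to check that each auxiliary element $x$, $y$, and $xy$ genuinely lies in $D(S)^{*}$, and that degenerate coincidences (such as $x=y$, or $x=a$, or $xy\in\{a,b\}$) either collapse to a shorter walk or are outright forbidden by $ab\neq 0$. All such checks follow from the associativity of multiplication in $S$ together with the nonvanishing hypotheses, so none threatens the length bound.
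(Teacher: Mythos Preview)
Your proof is correct and follows essentially the same route as the paper's: both pick $x,y$ (the paper's $c,d$) with $ax=0$ and $yb=0$ and split according to whether $xy$ vanishes, yielding a path of length at most $3$. The only differences are cosmetic---the paper isolates the subcase $c=d$ explicitly while you fold it into the final ``degenerate coincidences'' remark, and conversely you spell out the connected $\Rightarrow A^{l}(S)=A^{r}(S)$ direction (including the one-vertex case) where the paper simply declares it easy. One minor wording slip: in that direction you want the \emph{last} edge of the path $c\to\cdots\to a$, not the first, to witness an incoming edge at $a$.
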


\begin{pproof}
Suppose that $A^{l}(S) = A^{r}(S)$.\\
Let $a$ and $b$ be distinct vertices of $\Gamma(S)$. Then $a \neq
0$ and $b \neq 0$. We show that there is always a path with length at most 3 from $a$ to $b$.

$Case ~1$: $ab = 0$. Then $a \rightarrow b$ is a desired path.

$Case ~2$:  $ab \neq 0$. Then since $A^{l}(S) = A^{r}(S)$,
 there exists $c \in D(S) \setminus \{0\}$ such that
 $ac = 0$ and $d \in D(S) \setminus \{0\}$ such
 that $db= 0$.

$Subcase~ 2.1$: $c= d$. Then $a \rightarrow c \rightarrow b$ is a desired
 path.

$Subcase ~2.2$: $c \neq d$. If $cd = 0$, then $a \rightarrow c \rightarrow d \rightarrow b$ is a desired
 path. If $cd \neq 0$, then $a \rightarrow cd \rightarrow b$ is
 a desired path.

Thus $\Gamma(S)$ is connected and ${\rm diam}(\Gamma(S)) \leq 3$.

Conversely, if $\Gamma(S)$ is connected, then it is easy to show that $A^{l}(S) = A^{r}(S)$.\hfill $\square$
 \end{pproof}
\\

Now, we define a directed graph which extends the annihilating-ideal graph to an arbitrary ring. We associate to a  ring $R$ a directed graph (denoted by $\Bbb{APOG}(R)$) the zero-divisor graph of $\Bbb{IPO}(R)$, i.e.,  $\Bbb{APOG}(R)=\Gamma(\Bbb{IPO}(R))$.

\begin{ccoro}
Let $R$ be a ring. Then $\Bbb{APOG}(R)$ is connected if and only if $A^{l}(\Bbb{IPO}(R))=A^{r}(\Bbb{IPO}(R))$. Moreover, if $\Bbb{APOG}(R)$ is connected, then ${\rm diam}(\Bbb{APOG}(R))\leq 3$.
\end{ccoro}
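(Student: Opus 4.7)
The plan is to obtain this corollary as an immediate application of the theorem just proved (the one establishing that $\Gamma(S)$ is connected iff $A^{l}(S)=A^{r}(S)$, with diameter at most $3$ in the connected case), applied to the specific semigroup $S=\Bbb{IPO}(R)$.

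First I would observe that all hypotheses of the theorem are met. By Proposition 2.1, $\Bbb{IPO}(R)$ is indeed a semigroup. It has a zero element, namely the zero ideal $(0)=(0)\cdot R \in \Bbb{IPO}(R)$, which acts as a multiplicative absorbing element under ideal multiplication. Therefore $D(\Bbb{IPO}(R))$, and hence the vertex set $D(\Bbb{IPO}(R))^{*}$ of $\Gamma(\Bbb{IPO}(R))$, is well-defined.

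Next, by the very definition given in the paragraph preceding the corollary, $\Bbb{APOG}(R)=\Gamma(\Bbb{IPO}(R))$ as directed graphs. So both the connectivity assertion and the diameter bound transfer verbatim: applying the theorem with $S=\Bbb{IPO}(R)$ yields that $\Gamma(\Bbb{IPO}(R))$ is connected if and only if $A^{l}(\Bbb{IPO}(R))=A^{r}(\Bbb{IPO}(R))$, and that in the connected case ${\rm diam}(\Gamma(\Bbb{IPO}(R)))\leq 3$. Rewriting $\Gamma(\Bbb{IPO}(R))$ as $\Bbb{APOG}(R)$ gives exactly the statement of the corollary.

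There is essentially no obstacle here; the proof is a one-line substitution. The only thing worth sanity-checking is that $\Bbb{IPO}(R)$ really does contain $0$ as an element, and that the notions $A^{l}$, $A^{r}$ introduced for an abstract semigroup apply sensibly to this semigroup of ideals (they do, since the products $IJ$ with $(IJ)(I'J')=0$ witness one-sided annihilation in $\Bbb{IPO}(R)$). Thus the whole proof reduces to: apply Theorem \ref{diam33} to the semigroup $\Bbb{IPO}(R)$. \hfill $\square$
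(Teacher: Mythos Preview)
Your proposal is correct and matches the paper's own proof essentially verbatim: the paper simply notes that $\Bbb{APOG}(R)=\Gamma(\Bbb{IPO}(R))$ and invokes Theorem~\ref{diam33}. Your additional remarks verifying that $\Bbb{IPO}(R)$ is a semigroup with $0$ are fine (and slightly more careful than the paper), but the core argument is identical.
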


\begin{pproof}
Since $\Bbb{APOG}(R)$ is equal to $\Gamma(\Bbb{IPO}(R))$,  it follows from Theorem \ref{diam33} that $\Bbb{APOG}(R)$ is a connected if and only if $A^{l}(\Bbb{IPO}(R))=A^{r}(\Bbb{IPO}(R))$. Also,  if $\Bbb{APOG}(R)$ is connected, then ${\rm diam}(\Bbb{APOG}(R))\leq 3$.
\end{pproof}
\\

Recall that a Duo ring is a ring in which every one-sided ideal is a two-sided ideal.

\begin{ppro}\label{Duo}
 Let $R$ be an Artinian Duo ring.  Then $A^{l}(\Bbb{IPO}(R))=A^{r}(\Bbb{IPO}(R)) = \Bbb{IPO}(R)\setminus \{0, R\}$. Moreover, $\Bbb{APOG}(R)$ is connected and ${\rm diam}(\Bbb{APOG}(R))\leq 3$.
\end{ppro}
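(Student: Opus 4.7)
The plan is to exploit the Duo hypothesis to reduce $\Bbb{IPO}(R)$ to the set of all two-sided ideals, and then use the classical block decomposition of an Artinian ring into local Artinian rings to exhibit, for each proper nonzero ideal, an explicit two-sided annihilator. Since every one-sided ideal is two-sided and every two-sided ideal $I$ satisfies $I=IR=RI$, we have $\Bbb{IPO}(R)=\{\,\text{two-sided ideals of }R\,\}$, and $R$ itself is the identity of the semigroup $\Bbb{IPO}(R)$, so $D(\Bbb{IPO}(R))^{*}\subseteq \Bbb{IPO}(R)\setminus\{0,R\}$. It will therefore suffice to show the reverse inclusion for both $A^{l}$ and $A^{r}$.

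First I would establish the decomposition $R=R_{1}\times\cdots\times R_{k}$ with each $R_{i}$ local Artinian Duo. The ring $R/J(R)$ is semisimple Artinian and, being a quotient of a Duo ring, is itself Duo; since $M_{n}(D)$ fails to be Duo for $n\geq 2$, $R/J(R)$ is forced to be a finite product of division rings $D_{1}\times\cdots\times D_{k}$. The central idempotents corresponding to this product lift to central idempotents of $R$ (this is the standard idempotent lifting available in any semiperfect ring), giving the claimed block decomposition; each $R_{i}$ inherits being Artinian and Duo, and has $R_{i}/J(R_{i})\cong D_{i}$ so is local with nilpotent maximal ideal $\mathfrak{m}_{i}$ (let $n_{i}$ be minimal with $\mathfrak{m}_{i}^{n_{i}}=0$).

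With this in hand, I would take an arbitrary proper nonzero ideal $I=I_{1}\times\cdots\times I_{k}$ and construct a candidate annihilator $A=A_{1}\times\cdots\times A_{k}$ componentwise by setting $A_{j}=R_{j}$ if $I_{j}=0$, $A_{j}=0$ if $I_{j}=R_{j}$, and $A_{j}=\mathfrak{m}_{j}^{n_{j}-1}$ if $0\neq I_{j}\subsetneq R_{j}$. The equality $AI=IA=0$ then reduces in each coordinate to the trivial inclusions $\mathfrak{m}_{j}^{n_{j}-1}\cdot I_{j}\subseteq \mathfrak{m}_{j}^{n_{j}}=0$ and $I_{j}\cdot \mathfrak{m}_{j}^{n_{j}-1}\subseteq \mathfrak{m}_{j}^{n_{j}}=0$. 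Properness and nonzeroness of $A$ are verified componentwise: the fact that $I\neq R$ provides a coordinate with $I_{j}\neq R_{j}$, hence $A_{j}\neq 0$ (using $\mathfrak{m}_{j}^{n_{j}-1}\neq 0$ by minimality of $n_{j}$), while $I\neq 0$ provides a coordinate with $I_{j}\neq 0$, forcing $A_{j}\subseteq\mathfrak{m}_{j}$ or $A_{j}=0$, and in either case $A_{j}\neq R_{j}$.

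This shows $I\in A^{l}(\Bbb{IPO}(R))\cap A^{r}(\Bbb{IPO}(R))$, and combining with the trivial reverse containments gives the first assertion $A^{l}(\Bbb{IPO}(R))=A^{r}(\Bbb{IPO}(R))=\Bbb{IPO}(R)\setminus\{0,R\}$. The moreover clause is then immediate from Theorem \ref{diam33} applied to the semigroup $S=\Bbb{IPO}(R)$. The only step that is not routine is the block decomposition: this is where the noncommutative Duo hypothesis does the essential work, by ruling out matrix factors in $R/J(R)$ and producing the local summands whose nilpotent maximal ideals power the annihilator construction.
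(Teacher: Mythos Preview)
Your proof is correct and follows essentially the same strategy as the paper: decompose the Artinian Duo ring as a finite product of local Artinian rings (the paper cites \cite[Lemma 4.2]{KK10} for this, while you sketch it via the Duo structure of $R/J(R)$ and idempotent lifting---note that the lifted idempotents are automatically central because \emph{every} idempotent in a Duo ring is central), then use nilpotency of the local maximal ideals to build a nonzero proper two-sided annihilator, and finally invoke Theorem~\ref{diam33}. The only cosmetic difference is that the paper's annihilator is supported in a single coordinate (using a power $I_i^{k-1}$ of the chosen component of $A$), whereas yours is defined componentwise via $\mathfrak{m}_j^{n_j-1}$; both constructions yield the same conclusion.
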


\begin{pproof}
Let $R$ be a Duo ring. Then by \cite[Lemma 4.2]{KK10}, $R= (R_{1},\mathfrak{m}_{1}) \times (R_{2},\mathfrak{m}_{2}) \times \cdots (R_{n},\mathfrak{m}_{n})$, where each $R_{i} (1\leq i\leq n)$ is an Artinian local ring with unique maximal ideal $\mathfrak{m}_{i}$. Let $A \in \Bbb{IPO}(R)\setminus \{0, R\}$. Then $A = (I_{1} \times I_{2} \times \cdots \times I_{n})$
 $(J_{1} \times J_{2} \times ... \times J_{n})$, where every $I_{i} (1\leq i\leq n$) is an one-sided ideal, so is every $J_{j}(1\leq j \leq n)$. Since $A \neq R$, there exists
 $I_{i}$ (or $J_{j}$) such that $I_{i}\neq R$ (or $J_{j} \neq R$). Without loss of generality we may assume that $I_{i} \neq R$. So $A = (I_{1} \times I_{2} \times \cdots \times I_{n})$ $(J_{1} \times J_{2} \times \cdots \times J_{n})$
 $ \subseteq $ $(R_{1} \times \cdots\times I_{i} \times \cdots \times R_{n})$  $(R_{1} \times \cdots \times R_{i} \times \cdots \times
 R_{n})$. Suppose $k$ is the smallest positive integer such that ${I_{i}}^{k} =
 0 $. Thus $(0 \times \cdots \times I_{i}^{k - 1} \times ... \times
 0) ((R_{1} \times \cdots \times I_{i} \times \cdots \times R_{n})(R_{1} \times \cdots \times R_{i} \times \cdots \times
 R_{n})) = 0 $ and $((R_{1} \times \cdots \times I_{i} \times \cdots \times R_{n})(R_{1} \times \cdots \times R_{i} \times \cdots \times
 R_{n}))(0 \times \cdots \times I_{i}^{k - 1} \times \cdots \times
 0) = 0$. Therefore $A \in A^{l}(\Bbb{IPO}(R))$ and  $A\in A^{r}(\Bbb{IPO}(R))$. Thus $\Bbb{IPO}(R)\setminus \{0, R\}\subseteq A^{r}(\Bbb{IPO}(R))$ and $\Bbb{IPO}(R)\setminus \{0, R\}\subseteq A^{l}(\Bbb{IPO}(R))$. We conclude that $A^{r}(\Bbb{IPO}(R))=\Bbb{IPO}(R)\setminus \{0, R\}=A^{l}(\Bbb{IPO}(R))$.

The second part follows from Theorem \ref{diam33}. \hfill $\square$
 \end{pproof}
\\

 It is well known that if $|D(R)|\geq 2$ is finite, then $|R|$
  is finite. Let $A,B$ be vertices of $\Bbb{APOG}(R)$. We use $A\rightleftharpoons B$ if $A\rightarrow B$ or $A\leftarrow B$.  For any vertices $C$ and $D$ of $\Bbb{APOG}(R)$, let ${\rm ad}(C)$ = $\{A$ is a vertex of  $\Bbb{APOG}(R)$ :  $C = A$ or $C \rightleftharpoons A$ or there exists a vertex $B$ of  $\Bbb{APOG}(R)$ such that $C \rightleftharpoons  B \rightleftharpoons A$ $\}$ and ${\rm adu}(D)=\bigcup_{C\subseteq D}{\rm ad}(C)$. We know that $ {\rm ad}(C)\subseteq D(R)$. The following proposition shows that if a principal left or right ideal $I$ of $R$ is a vertex of $\Bbb{APOG}(R)$  and all  left and right ideals of ${\rm ad}(I)$ have finite cardinality, then $R$ has finite cardinality.

 \begin{ppro}
 Let $R$ be a ring and $I$ be a
 principal left or right ideal of $R$ such that $I$ is a vertex of $\Bbb{APOG}(R)$. If all left and right ideals of ${\rm ad}(I)$ have finite cardinality, then $R$ has finite cardinality.
 \end{ppro}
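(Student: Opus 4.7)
The plan is to reduce without loss of generality to the case where $I = Rx$ is a principal left ideal (the principal right case follows by the symmetric argument with ``left'' and ``right'' interchanged). Since $I \in {\rm ad}(I)$ is itself a left ideal, $|I|$ is finite by hypothesis. I would then exploit the two surjective abelian group homomorphisms $\phi : R \to Rx$, $r \mapsto rx$, and $\psi : R \to xR$, $r \mapsto xr$, whose kernels are the left annihilator $L := \{r : rx = 0\}$ and the right annihilator $K := \{r : xr = 0\}$ respectively. These give the cardinality identities $|R| = |L|\cdot|I|$ and $|R| = |K|\cdot|xR|$, so it is enough to exhibit either $L$, or both $K$ and $xR$, as finite one-sided ideals in ${\rm ad}(I)$.

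Two short-circuit cases are immediate. If $L = 0$ then $\phi$ is injective and $|R| = |I|$ is finite. If $L \neq 0$ but $K = 0$, then because $I$ is a vertex while no nonzero $J$ satisfies $IJ = 0$, the two-sided ideal $L' := \{r : rRx = 0\}$ must be nonzero; $L' I = 0$ puts $L'$ in ${\rm ad}(I)$, and the direct computation $L' \cdot xR = (L'x)R = 0$ (using $\ell' x = 0$, which follows from $\ell' R x = 0$ by taking the identity) shows $L'$ is a common neighbor of $I$ and $xR$. Hence $xR \in {\rm ad}(I)$, the hypothesis yields $|xR|$ finite, and $|R| = |xR|$.

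The substantive case is $K, L \neq 0$. Here $I K = R(xK) = 0$ places $K \in {\rm ad}(I)$, so $|K|$ is finite. The pivotal object is the product $KL$, which lies in $\Bbb{IPO}(R)$ as a product of a right and a left ideal. For any $k \in K$ and $\ell \in L$, both $x(k\ell) = (xk)\ell = 0$ and $(k\ell)x = k(\ell x) = 0$, so $KL \subseteq K \cap L$. I would then split on whether $KL$ is zero. If $KL \neq 0$, the same identities give $I \cdot KL = Rx \cdot KL = R(xK)L = 0$ and $KL \cdot xR = K(Lx)R = 0$, making $KL$ a nonzero common neighbor of $I$ and $xR$; thus $xR \in {\rm ad}(I)$, $|xR|$ is finite, and $|R| = |K|\cdot|xR|$ is finite. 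If instead $KL = 0$, then $K \rightleftharpoons L$ is an edge in $\Bbb{APOG}(R)$; combined with $I \rightleftharpoons K$, this puts $L$ within distance at most $2$ of $I$, so $L \in {\rm ad}(I)$, $|L|$ is finite, and $|R| = |L|\cdot|I|$ is finite.

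The main obstacle I anticipate is locating the right pivot in $\Bbb{IPO}(R)$ that simultaneously witnesses short paths from $I$ to both the annihilator $K$ and the opposite-sided principal ideal $xR$ in the generic case. The product $KL$ is engineered to lie inside $K \cap L$ (so that it is killed by $x$ from either side) while remaining a product of two one-sided ideals, which is what makes the common-neighbor argument go through whenever $KL \neq 0$; the remaining possibility $KL = 0$ is exactly the direct edge between $K$ and $L$ that instead puts $L$ (rather than $xR$) within distance two of $I$. Verifying this dichotomy cleanly — without it, there is no obvious reason for either $L$ or $xR$ to lie in ${\rm ad}(I)$, and the finiteness hypothesis cannot be brought to bear — is the heart of the proof.
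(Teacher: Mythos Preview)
Your proof is correct and follows essentially the same approach as the paper: both arguments reduce to $I=Rx$, use the left and right annihilators $L=Ann_l(x)$ and $K=Ann_r(x)$ together with the cardinality identities $|R|=|I|\cdot|L|=|xR|\cdot|K|$, and pivot on the product $KL\in\Bbb{IPO}(R)$, splitting on whether $KL=0$ (which yields $L\in{\rm ad}(I)$ via $I\to K\to L$) or $KL\neq 0$ (which yields $xR,K\in{\rm ad}(I)$ via $I\to KL\to xR$). Your handling of the $K=0$ subcase differs slightly---you use the two-sided ideal $L'=\{r:rRx=0\}$ as a common neighbor of $I$ and $xR$ to conclude $|R|=|xR|$, whereas the paper takes a nonzero right ideal $J\subseteq L'$ and uses $xJ$ as a common neighbor of $I$ and $L$ to conclude $|R|=|I|\cdot|L|$---but this is a cosmetic variation, and your case organization is in fact a bit cleaner (the paper separately treats $I=K$ versus $I\neq K$, which your argument shows is unnecessary).
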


\begin{pproof}
Without loss of generality, we may assume that $I$ is a left principal ideal. Thus
$I = Rx$ for some non-zero $x \in R$. If $Ann_{l}(x) = 0$, then $| R | = | I| <\infty$. So we may always assume that $Ann_{l}(x) \not= 0$.

$ Case ~1$: $I = Ann_{r}(x)$ and $Ann_{r}(x)Ann_{l}(x) = 0$.  Then
 $$I \rightarrow Ann_{l}(x)$$ and so $Ann_{l}(x) \in {\rm ad}(I)$.
 Therefore, $Ann_{l}(x)$ is finite. Since $I \cong R/
 Ann_{l}(x)$, $| R | = | I| | Ann_{l}(x)|<\infty$.

$ Case ~2$: $I \neq Ann_{r}(x)$ and $Ann_{r}(x)Ann_{l}(x) = 0$. If $Ann_{r}(x) \not= 0$,
 then $$I \rightarrow Ann_{r}(x) \rightarrow Ann_{l}(x)$$ and so $Ann_{l}(x) \in {\rm ad}(I)$.
 Therefore, $Ann_{l}(x)$ is finite. Since $I \cong R/
 Ann_{l}(x)$, $| R | = | I| | Ann_{l}(x)|<\infty$. If $Ann_{r}(x) = 0$, then since  $Rx$ is a vertex of $\Bbb{APOG}(R)$, there exists a  (nonzero right ideal) $J$ such that $JRx=0$ (replace $J$ by $JR$ if necessary). Since $Ann_{r}(x) = 0$, we have $xJ$ is a nonzero right ideal and so $$ Ann_{l}(x) \rightarrow xJ \rightarrow I.$$ Thus $Ann_{l}(x) \in {\rm ad}(I)$, so $Ann_{l}(x)$ is finite. Again, we have $| R | = | I| | Ann_{l}(x)|<\infty$.

$ Case ~3$:  $I \neq Ann_{r}(x)$ and $ Ann_{r}(x)Ann_{l}(x) \neq 0 $.
 Then $$Ann_{r}(x)\leftarrow I \rightarrow Ann_{r}(x)Ann_{l}(x) \rightarrow (xR)$$
 and so $(xR), Ann_{r}(x) \in {\rm ad}(I)$. Therefore,  $(xR)$ and  $Ann_{r}(x)$
 are finite. Since $(xR) \cong R/
 Ann_{r}(x)$, $| R | = | (xR)| | Ann_{r}(x)|<\infty$. This completes the proof. \hfill $\square$
 \end{pproof}\\

Here is our main result in this section.

\begin{ttheo}\label{Artin}
Let $R$ be a ring such that $\Bbb{APOG}(R)\neq \emptyset$. Then $R$ is Artinian (resp., Noetherian) if and only if for a left or right ideal $I$ in the vertex set of  $\Bbb{APOG}(R)$, ${\rm adu}(I)$ has DCC (resp., ACC) on both its left and right ideals.
\end{ttheo}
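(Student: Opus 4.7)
The forward implication is immediate: if $R$ is left and right Artinian (resp.\ Noetherian), then every descending (resp.\ ascending) chain of one-sided ideals of $R$ stabilizes, so in particular any chain whose terms lie in ${\rm adu}(I)$ stabilizes. Thus the content lies entirely in the converse.

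For the converse, I would assume ${\rm adu}(I)$ satisfies DCC on both its left and right ideals for some one-sided ideal vertex $I$ of $\Bbb{APOG}(R)$ and aim to derive that $R$ is left and right Artinian. After a symmetric reduction, take $I$ to be a left ideal. Because $I$ is a vertex of $\Bbb{APOG}(R)$, there exists a nonzero $J\in\Bbb{IPO}(R)$ with $IJ=0$ or $JI=0$. My first task is to catalogue what automatically sits inside ${\rm adu}(I)$: in the case $IJ=0$, every nonzero left ideal $L\subseteq I$ satisfies $LJ\subseteq IJ=0$, so $L$ is a vertex with $L\to J$ and hence $L\in{\rm ad}(L)\subseteq{\rm adu}(I)$; likewise every right ideal $K$ with $IK=0$ lies in ${\rm ad}(I)\subseteq{\rm adu}(I)$; iterating, every vertex within graph-distance two of any sub-vertex of $I$ is captured.

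The central step is to promote DCC on ${\rm adu}(I)$ into DCC on all left and right ideals of $R$. Given an arbitrary descending chain $L_1\supseteq L_2\supseteq\cdots$ of left ideals of $R$, I would first consider the intersections $L_n\cap I$: these form a descending chain of left ideals of $I$ whose nonzero terms all lie in ${\rm adu}(I)$ by the cataloguing above, so the chain stabilizes. To control the portion of the original chain that escapes $I$, I would feed the right-ideal DCC on ${\rm adu}(I)$ the ascending chain of right annihilators $Ann_{r}(L_n)$, which enter ${\rm adu}(I)$ via the edges $L_n\cap I\to Ann_{r}(L_n)$ coming from $(L_n\cap I)Ann_{r}(L_n)=0$. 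Combining stabilization of $\{L_n\cap I\}$ with stabilization of $\{Ann_{r}(L_n)\}$ through a standard annihilator argument should force the $L_n$ themselves to stabilize. The Noetherian case is entirely dual, with ACC in place of DCC.

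The main obstacle, in my view, is precisely this bootstrap: a generic left ideal $L$ of $R$ is not obliged to lie in ${\rm adu}(I)$, so one cannot directly transfer an arbitrary chain from $R$ to ${\rm adu}(I)$. The workaround must exploit both halves of the symmetric hypothesis (DCC on left and on right ideals of ${\rm adu}(I)$) together with the semigroup multiplication on $\Bbb{IPO}(R)$ recorded in Proposition~2.1. I therefore expect the detailed proof to split cases on whether $IJ=0$ or $JI=0$, trace each chain through the adjacency structure around $I$ by iterated intersection with $I$ and passage to one-sided annihilators, and ensure at each step that the relevant tail of the chain falls inside ${\rm adu}(I)$.
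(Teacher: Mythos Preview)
Your forward direction matches the paper's, but your converse strategy has a genuine gap. You try to push an \emph{arbitrary} descending chain $L_1\supseteq L_2\supseteq\cdots$ of left ideals into ${\rm adu}(I)$ by looking at $L_n\cap I$ and at $Ann_r(L_n)$. This fails on several counts: the intersections $L_n\cap I$ may all vanish (nothing forces a general $L_n$ to meet $I$ nontrivially); the sequence $Ann_r(L_n)$ is \emph{ascending}, so DCC on right ideals of ${\rm adu}(I)$ says nothing about it; and even if both auxiliary chains stabilized, there is no ``standard annihilator argument'' that recovers stabilization of the $L_n$ themselves from knowledge of $L_n\cap I$ and $Ann_r(L_n)$. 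In short, the bootstrap you outline does not transfer DCC from ${\rm adu}(I)$ to all of $R$.

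The paper takes a completely different route. It does \emph{not} attempt to handle arbitrary chains. Instead it fixes a single nonzero $x\in I$ and, by a case analysis on whether $xRx$, $Ann_l(x)$, $Ann_r(x)$ are zero, exhibits short paths in $\Bbb{APOG}(R)$ showing that each of the four ideals $Rx$, $Ann_l(x)$, $xR$, $Ann_r(x)$ lies in ${\rm ad}(C)$ for some $C\subseteq I$ (typically $C=xRx$ or $C=Rx$); in particular every nonzero one-sided sub-ideal of each of them also lies in ${\rm adu}(I)$. The DCC hypothesis then makes $Rx$ and $Ann_l(x)$ left Artinian $R$-modules and $xR$ and $Ann_r(x)$ right Artinian $R$-modules. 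Finally the isomorphisms $Rx\cong R/Ann_l(x)$ and $xR\cong R/Ann_r(x)$, together with the two-out-of-three principle for Artinian modules in a short exact sequence (\cite[(1.20)]{La91}), force $R$ to be left and right Artinian. The missing idea in your proposal is precisely this reduction to a \emph{single} element $x$ and the use of the exact sequences $0\to Ann_l(x)\to R\to Rx\to 0$ and $0\to Ann_r(x)\to R\to xR\to 0$, rather than an attempt to control all chains at once.
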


\begin{proof}
If $R$ is Artinian, then $\Bbb{IPO}(R)$ has DCC  on  both its left ideals and right ideals. Thus for every left or right ideal of the vertex set of  $\Bbb{APOG}(R)$, ${\rm adu}(I)$ has DCC on both its left and right ideals  as ${\rm adu}(I)\subseteq \Bbb{IPO}(R)$.

Conversely, without loss of generality let $I$ be a left ideal of vertex set of $\Bbb{APOG}(R)$ such that ${\rm adu}(I)$ has DCC on its left and right ideals. Assume that $x\in I$.  We have the following cases:

$Case~1$: $xRx\neq \{0\}$, $Ann_l(x)\neq 0$, and $Ann_r(x)\neq 0$. Then $$(xR)\leftarrow Ann_l(x)\leftarrow xRx\rightarrow Ann_{r}(x)\leftarrow (Rx).$$
Therefore $(xR),Ann_r(x), Ann_l(x), (Rx)\in {\rm ad}(xRx)$. Since ${\rm ad}(xRx)\subseteq {\rm adu}(I)$ and ${\rm adu}(I)$ has DCC  on its left and right ideals, we conclude that $(Rx)$ and $Ann_l(x)$ are left Artinian $R$-modules, and $(xR)$ and $Ann_r(x)$ are right Artinian  $R$-modules. Since $(Rx)\cong R/Ann_l(x)$ and $(xR)\cong R/Ann_r(x)$,   by \cite[(1.20)]{La91} we conclude that  $R$ is Artinian.

$Case~2$: $xRx=\{0\}$, $Ann_l(x)\neq 0$, and $Ann_r(x)\neq 0$.  Then $$Ann_l(x)\rightarrow(xR)\rightarrow(Rx)\rightarrow Ann_r(x).$$ Since ${\rm ad}(Rx)\subseteq {\rm adu}(I)$  and ${\rm adu}(I)$ has DCC  on its  left and right ideals, we conclude that $(Rx)$ and $Ann_l(x)$ are left Artinian  $R$-modules, and $(xR)$ and $Ann_r(x)$ are right Artinian $R$-modules. Since $(Rx)\cong R/Ann_l(x)$ and $(xR)\cong R/Ann_r(x)$,  by \cite[(1.20)]{La91} we conclude that $R$ is Artinian.

$Case~3$: $Ann_l(x)=\{0\}$. Then $Rx\cong R$. Therefore, $R$ is a left Artinian module. Since $Rx$ is a vertex of  $\Bbb{APOG}(R)$, we have $Ann_r(x)\neq \{0\}$. So there exists $y\in D(R)\setminus \{0\}$ such that $xy=0$.

$Subcase~3.1$: $yRy\neq \{0\}$.  If $Ann_r(y)=\{0\}$, then since $$Rx\rightarrow yR,$$ we have $yR\in {\rm adu}(I)$, so $yR$ is a Artinian right $R$-module. Note that $yR\cong R$. Therefore, $R$ is a right Artinian module. If  $Ann_r(y)\neq\{0\}$, then $$Ann_r(y)\leftarrow yRy \leftarrow yRx \rightarrow yR.$$ Therefore $(yR),Ann_{r}(y)\in {\rm ad }(yRx)\subseteq {\rm adu}(I)$.  Since ${\rm adu}(I)$ has DCC  on its right ideals, we conclude that $(yR)$ and $Ann_r(y)$ are right Artinian $R$-modules. Note that $(yR)\cong R/Ann_r(y)$, by \cite[(1.20)]{La91} we conclude that $R$ is a right Artinian module.

$Subcase~3.2$: $yRy= \{0\}$. Then $$yR\leftarrow yRx\leftarrow Ry\rightarrow Ann_r(y).$$ Since $(yR), Ann_r(y)\in {\rm ad}(yRx)\subseteq {\rm adu}(I)$,  we conclude that $(yR)$ and $Ann_r(y)$ are right Artinian $R$-modules. Note that $(yR)\cong R/Ann_r(y)$, by \cite[(1.20)]{La91}  we conclude that $R$ is a right Artinian module.

$Case~4$: $Ann_r(x)=\{0\}$. Then $xRx\neq \{0\}$ and since $Rx$ is a vertex of $\Bbb{APOG}(R)$, we have $Ann_l(x)\neq \{0\}$. Therefore, $$(xR)\leftarrow Ann_l(x)\rightarrow xRx.$$ We conclude that $xR,Ann_l(x)\in {\rm ad}(xRx)\subseteq {\rm adu}(I)$. Since  $xR,Rx,Ann_l(x)\in {\rm adu}(I)$, we have $Rx$ and $Ann_l(x)$ are left Artinian modules and $xR$ is a right Artinian module.   Note that  $(Rx)\cong R/Ann_l(x)$ and $(xR)\cong R/Ann_r(x)$. Again by \cite[(1.20)]{La91} we conclude that $R$ is Artinian.\hfill $\square$
\end{proof}
\\

\begin{ccoro}\label{Artin2}
Let $R$ be a ring such that $\Bbb{APOG}(R)\neq \emptyset$. Then $R$ is Artinian (resp., Noetherian) if and only if  $\Bbb{APOG}(R)$ has DCC (resp., ACC) on left and right ideals of its vertex set.
\end{ccoro}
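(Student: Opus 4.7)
The plan is to deduce this corollary as an immediate consequence of Theorem \ref{Artin}, using the fact that for any vertex $I$ of $\Bbb{APOG}(R)$, the set ${\rm adu}(I)=\bigcup_{C\subseteq I}{\rm ad}(C)$ is a subset of the vertex set of $\Bbb{APOG}(R)$ by construction, since each ${\rm ad}(C)$ consists of vertices of $\Bbb{APOG}(R)$.

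For the forward direction, suppose $R$ is Artinian (resp., Noetherian). Then every chain of left or right ideals of $R$ stabilizes, and in particular every such chain that lies in the vertex set of $\Bbb{APOG}(R)$, which is a subset of $\Bbb{IPO}(R)$, stabilizes. Hence $\Bbb{APOG}(R)$ satisfies DCC (resp., ACC) on the left and right ideals in its vertex set.

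For the converse, assume $\Bbb{APOG}(R)$ has DCC (resp., ACC) on the left and right ideals of its vertex set. Pick any left or right ideal $I$ belonging to the vertex set, which is available as in the setup of Theorem \ref{Artin} since $\Bbb{APOG}(R)\neq\emptyset$. Because ${\rm adu}(I)$ is contained in the vertex set of $\Bbb{APOG}(R)$, any descending (resp., ascending) chain of left or right ideals inside ${\rm adu}(I)$ is also such a chain inside the vertex set, and therefore terminates by hypothesis. Thus ${\rm adu}(I)$ has DCC (resp., ACC) on both its left and right ideals, and Theorem \ref{Artin} then yields that $R$ is Artinian (resp., Noetherian).

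The only content of the argument is the set-theoretic containment ${\rm adu}(I)\subseteq$ vertex set of $\Bbb{APOG}(R)$ together with the observation that chain conditions pass to subsets; the substantive work has already been carried out in Theorem \ref{Artin}, so no real obstacle is expected.
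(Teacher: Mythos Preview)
Your proof is correct and matches the paper's own argument essentially line for line: the forward direction is the trivial observation that chain conditions on $R$ restrict to any subset, and the converse is exactly the containment ${\rm adu}(I)\subseteq\text{vertex set of }\Bbb{APOG}(R)$ followed by an appeal to Theorem~\ref{Artin}. There is nothing to add.
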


\begin{pproof}
Since vertex set of  $\Bbb{APOG}(R)$ is a subset of $\Bbb{IPO}(R)$, As in the proof of Theorem \ref{Artin},  if $R$ is Artinian (resp., Noetherian), then  $\Bbb{APOG}(R)$ has DCC (resp., ACC) on left and right ideals of its vertex set. \\

Conversely, since for a left or right ideal $I$ of the vertex set of $\Bbb{APOG}(R)$, ${\rm adu}(I)$ is a subset of the vertex set of $\Bbb{APOG}(R)$, it follows from Theorem \ref{Artin} that $R$ is Artinian.\hfill $\square$
\end{pproof}
\\

A directed graph $\Gamma$ is called a
tournament if for every two distinct vertices $x$ and $y$ of $\Gamma$
exactly one of $xy$ and $yx$ is an edge of $\Gamma$. In other words,  a
tournament is a complete graph with exactly one direction assigned
to each edge.

\begin{ppro} Let $R$ be a ring such that
$A^{2}\neq \{0\}$ for every non-zero $A \in \Bbb{IPO}(R)$ and
$A^{l}(\Bbb{IPO}(R))\cap A^{r}(\Bbb{IPO}(R)) \neq \emptyset$. Then $\Bbb{APOG}(R)$ is not a tournament.
\end{ppro}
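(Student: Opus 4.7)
The plan is a proof by contradiction. Suppose $\Bbb{APOG}(R)$ is a tournament. Fix an element $A \in A^{l}(\Bbb{IPO}(R)) \cap A^{r}(\Bbb{IPO}(R))$, which is nonempty by hypothesis, and choose a nonzero $B \in \Bbb{IPO}(R)$ with $BA = 0$. Observe $A \neq B$, else $A^{2} = BA = 0$ would contradict the standing hypothesis that $X^{2} \neq \{0\}$ for every nonzero $X \in \Bbb{IPO}(R)$. The relation $BA = 0$ places the edge $B \rightarrow A$ in the graph, so the tournament property forces $AB \neq 0$.

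The pivotal step is to show that $A$ and $B$ are both idempotent in the semigroup $\Bbb{IPO}(R)$. By Proposition 2.1, $A^{2} \in \Bbb{IPO}(R)$; the hypothesis gives $A^{2} \neq 0$; and $B \cdot A^{2} = (BA)A = 0$ exhibits $A^{2}$ as a zero-divisor, hence a vertex of $\Bbb{APOG}(R)$. If $A \neq A^{2}$, then $A$ and $A^{2}$ are distinct vertices, yet the two candidate edges between them coincide in content: $A \cdot A^{2} = A^{3} = A^{2} \cdot A$. Consequently either both edges are present (when $A^{3} = 0$) or both are absent (when $A^{3} \neq 0$), and in either case the tournament property fails. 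Hence $A^{2} = A$. An identical argument applied to $B$, using $B^{2} \cdot A = B(BA) = 0$ to realise $B^{2}$ as a vertex, forces $B^{2} = B$.

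The contradiction now comes out of the computation
$$
(AB)^{2} \;=\; A(BA)B \;=\; A \cdot 0 \cdot B \;=\; 0,
$$
combined with $AB \in \Bbb{IPO}(R) \setminus \{0\}$, which violates the hypothesis that no nonzero element of $\Bbb{IPO}(R)$ has zero square. Therefore $\Bbb{APOG}(R)$ is not a tournament. The step I expect to require the most insight is noticing that the tournament condition on a pair $\{X, X^{2}\}$ collapses to a single ambiguous condition on $X^{3}$; this rigidity forces idempotency of every vertex whose square is nonzero, after which the vanishing of $(AB)^{2}$ follows automatically from $BA = 0$ and produces the contradiction.
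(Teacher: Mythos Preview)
Your proof is correct, but the idempotency step you flag as ``pivotal'' is entirely unnecessary. Once you have $BA = 0$ with $A \neq B$ and the tournament property forces $AB \neq 0$, the computation $(AB)^{2} = A(BA)B = 0$ already goes through verbatim---it uses only associativity, never $A^{2}=A$ or $B^{2}=B$. Since $\Bbb{IPO}(R)$ is a semigroup, $AB$ is a nonzero element of $\Bbb{IPO}(R)$ with zero square, and you have your contradiction immediately. (In fact this stripped-down version only uses $A \in A^{l}(\Bbb{IPO}(R))$, so the full intersection hypothesis is not even needed for your route.) The idempotency argument is valid and mildly interesting in its own right, but it contributes nothing to the contradiction you actually reach.

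The paper proceeds differently: it takes $B \in A^{l}\cap A^{r}$ together with witnesses $A \to B \to C$, first forces $CA = 0$ (else $CA$ would have edges to and from $B$), then uses the tournament property to get $AC \neq 0$, and finally shows via an element-level computation that both $(AC)B = 0$ and $B(AC) = 0$, producing two opposite edges between the distinct vertices $AC$ and $B$. So the paper contradicts the tournament condition directly, whereas you contradict the no-zero-square hypothesis. Your route (minus the idempotency detour) is shorter and needs less of the hypothesis; the paper's route is more intricate and genuinely uses both sides of $A^{l}\cap A^{r}$.
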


\begin{pproof}
Assume $\Bbb{APOG}(R)$ is a tournament. Since
$A^{l}(\Bbb{IPO}(R)) \cap A^{r}(\Bbb{IPO}(R)) \neq \emptyset$, there exists $B \in
A^{l}(\Bbb{IPO}(R))\cap A^{r}(\Bbb{IPO}(R))$, that is, there exist distinct non-zero
$A, C \in \Bbb{IPO}(R)$ such that $A\rightarrow B\rightarrow C$
is a path in $\Bbb{APOG}(R)$. If $CA\neq \{0\}$, then $B(CA)=(BC)A=\{0\}$ and
$(CA)B=C(AB)=\{0\}$, which is a contradiction. So $CA=\{0\}$ and
therefore $AC\neq \{0\}$ since $\Bbb{APOG}(R)$ is a tournament. Also, $AC
\neq A$ (otherwise $A^{2}= (ACAC)=A(CA)C = \{0\}$) and similarly,
$AC\neq C$. Let $a,a_1\in A$ and $c,c_1\in C$. Then we have $B\rightarrow
C\rightarrow ((a-a_1c)R)$ and $(R(c-ac_1))\rightarrow A\rightarrow B$. As the above $((a-a_1c)R)B=\{0\}$ and $B(R(c-ac_1))=\{0\}$. Let $b\in B$ be an arbitrary element. Then $-acb=a_1b-acb\in ((a-a_1c)R)B=\{0\}$ and $bac=bc_1-bac\in B(R(c-ac_1))=\{0\}$. Therefore, $ACB=\{0\}$ and $BAC=\{0\}$. Thus both $AC\rightarrow B$ and $B\rightarrow AC$ are edges of
$\Bbb{APOG}(R)$. This is a contradiction, hence, $\Bbb{APOG}(R)$ cannot be a
tournament.\hfill $\square$

\end{pproof}


\section{Undirected Annihilating-Ideal Graph of a Ring}

Let $S$ be a semigroup with $0$ and recall that $D(S)$ denotes the set of one-sided zero-divisors of $S$. We associate to $S$ an undirected graph $\overline{\Gamma}(S)$ with vertices set $D(S)^{*}=D(S)\setminus \{0\}$ and two distinct vertices $a$ and $b$ are adjacent if $ab=0$ or $ba=0$.
 Similarly, we associate to a  ring $R$ an undirected graph (denoted by $\overline{\Bbb{APOG}}(R)$) the undirected zero-divisor graph of $\Bbb{IPO}(R)$, i.e.,  $\overline{\Bbb{APOG}}(R)=\overline{\Gamma}(\Bbb{IPO}(R))$.
The only difference between $\Bbb{APOG}(R)$ and $\overline{\Bbb{APOG}}(R)$
is that the former is a directed graph and the latter is
undirected (that is, these graphs share the same vertices and the
same edges if directions on the edges are ignored). If $R$ is a
commutative ring, this definition agrees with the previous
definition of the annihilating-ideal graph. In this section we study the properties of $\overline{\Gamma}(R)$.  We first show that $\overline{\Gamma}(R)$ is always connected with diameter at most 3.

\begin{ttheo}\label{diam}
 Let $S$ be a semigroup. Then
$\overline{\Gamma}(S)$ is a connected graph and
${\rm diam}(\overline{\Gamma}(S)) \leq 3.$
\end{ttheo}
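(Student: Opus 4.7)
The plan is to mimic the proof of Theorem \ref{diam33}, but to exploit the undirected structure of $\overline{\Gamma}(S)$ so that no analogue of the hypothesis $A^l(S) = A^r(S)$ is required. For two distinct vertices $a,b \in D(S)^*$, I aim to produce a path of length at most $3$ from $a$ to $b$. If $ab = 0$ or $ba = 0$, then $a$ and $b$ are directly adjacent and there is nothing to do, so henceforth assume $ab \neq 0 \neq ba$. Since $a \in D(S)^*$ there exists a nonzero $c$ with $ac = 0$ or $ca = 0$, and similarly a nonzero $d$ with $bd = 0$ or $db = 0$. By passing to the opposite semigroup if necessary, I may assume $ac = 0$, and then split on which side annihilates $b$.

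If $db = 0$, the argument of Theorem \ref{diam33} carries over verbatim: the subcases $c = d$, $c \neq d$ with $cd = 0$, and $cd \neq 0$ yield respectively the paths $a - c - b$, $a - c - d - b$, and $a - cd - b$, where in the last case I use $a(cd) = (ac)d = 0$ and $(cd)b = c(db) = 0$.

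If instead $bd = 0$, I introduce the auxiliary element $cb$. When $cb = 0$, the vertex $c$ is adjacent to $b$, so $a - c - b$ does the job. When $cb \neq 0$, the identities
\[
a(cb) = (ac)b = 0, \qquad (cb)d = c(bd) = 0
\]
produce edges $a - cb$ and $cb - d$, and together with the edge $b - d$ arising from $bd = 0$ they assemble into the walk $a - cb - d - b$ of length $3$. The case $ca = 0$ in place of $ac = 0$ is treated by the symmetric construction, using $bc$ (or the analogous product) in place of $cb$.

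The main obstacle is verifying that the walk $a - cb - d - b$ is a genuine path, i.e.\ that its vertices are pairwise distinct; most of the bookkeeping consists of observing that each possible coincidence either contradicts the standing assumption $ab \neq 0$ or collapses the walk to a strictly shorter admissible path. For example, $cb = b$ would force $ab = a(cb) = 0$, a contradiction; $cb = d$ yields $ad = (cb)d = 0$ directly, giving the length-$2$ path $a - d - b$; $d = b$ implies $b^2 = 0$, whence $(cb)b = 0$ furnishes the length-$2$ path $a - cb - b$; and $c = d$ gives $a - c - b$ immediately via $ac = bc = 0$. Once these degenerate subcases are dispatched, both connectedness and the bound ${\rm diam}(\overline{\Gamma}(S)) \leq 3$ are established simultaneously.
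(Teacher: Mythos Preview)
Your plan is correct and yields a valid proof, but it is organized differently from the paper's argument. The paper does \emph{not} model the proof on Theorem~\ref{diam33}; instead it performs a four-way case split on whether $a^2=0$ and whether $b^2=0$. When $a^2=b^2=0$ the intermediate vertex is simply $ab$ (giving $a-ab-b$); when exactly one square vanishes, say $a^2=0$, the paper picks $c\notin\{0,a,b\}$ annihilating $b$ on one side and uses $ca$ or $ac$ as the intermediate; only when $a^2\neq 0\neq b^2$ does it choose two annihilators $c,d\in D(S)\setminus\{0,a,b\}$ and run through the four side-combinations, producing paths such as $a-cd-b$, $a-dc-b$, $a-cb-d-b$, or $a-bc-d-b$.

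Your route avoids the $a^2,b^2$ dichotomy entirely by choosing $c$ and $d$ up front and reducing, via the opposite semigroup, to $ac=0$; you then distinguish only $db=0$ versus $bd=0$. This is tidier at the top level and makes the analogy with Theorem~\ref{diam33} transparent, at the cost of a little extra bookkeeping because you do not exclude $c=a$ or $d=b$ at the outset (the paper's square-splitting guarantees $c,d\notin\{a,b\}$ in the hard case). You have noticed that these degeneracies collapse the walk to a shorter path, but when you write the argument out in full you should record the remaining coincidences you did not list---in particular $cb=a$, $c=a$, and $d=a$---each of which is harmless (the last two contradict $ab\neq 0$ or $ba\neq 0$, and $cb=a$ gives $ad=0$ and hence the length-$2$ path $a-d-b$).
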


\begin{pproof}
Let $a$ and $b$ be distinct vertices of
$\overline{\Gamma}(S)$. If $ab = 0$ or $ba =  0$, then $a - b$ is a path. Next assume that $ab \neq  0$ and $ba \neq  0$.

$Case ~1$: $a^{2} =  0$ and $b^{2} =  0$. Then $a - ab - b$ is
a path.

$Case ~2$: $a^{2} =  0 $ and $b^{2} \neq  0$. Then there is a
some $c \in D(S) \setminus \{a, b , 0\}$ such that either
$cb =  0$ or $bc =  0$. If either $ac =  0$ or $ca =  0$, then
$a - c - b$ is a path. If $ac \neq  0$ and $ca\neq  0$, then $a
- ca - b$ is a path if $bc =
 0$ and $ a - ac - b$ is a path if $cb =  0$.

$Case ~3$: $a^{2} \neq  0$ and $b^{2} =  0$. We can use an
argument
similar to that of the above case to obtain a path.

$Case ~4$: $a^{2} \neq  0$ and $b^{2} \neq  0$. Then there
exist $c, d \in D(S) \setminus \{a, b , 0\}$ such that
either $ca =  0$ or $ac =  0$ and either $db = 0$ or $bd =
 0$. If $bc = 0$ or $cb=0$, then $a - c - b$ is a path. Similarly, if $ad=0$ or $ da=0$,  $a - d - b$ is a path. So we may assume that $c\neq d$. If $cd=0$ or $dc=0$, then $a - c - d - b$ is a path. Thus we may further assume that  $cd \neq  0, dc \neq  0$, $bc \neq  0, cb \neq  0 $, $ad \neq 0$ and $da \neq 0.$  We divide the proof into 4 subcases.

$Subcase ~4.1$: $ac =  0$ and $db =  0$. Then $a - cd - b$ is a path.

$Subcase ~4.2$: $ac =  0$ and  $bd =  0$. Then $a - cb - d - b$ is a path.

$Subcase ~4.3$: $ca =  0$ and $bd =  0$. Then $a - dc - b$ is a path.

$Subcase ~4.4$: $ca =  0$ and $db = 0$.  $a - bc - d - b$ is a path.

Thus $\overline{\Gamma}(S)) $ is connected and
${\rm diam}(\overline{\Gamma}(S)) \leq 3.$ \hfill $\square$
\end{pproof}\\

In \cite {AL99}, Anderson and Livingston proved that if $\Gamma(R)$ (the zero-divisor graph of a commutative ring $R$) contains a cycle, then ${\rm gr} (\Gamma(R))\leq 7$. They also proved that ${\rm gr}(\Gamma(R)) \leq 4$  when $R$ is Artinian and
conjectured that this is the case for all commutative rings $R$. Their conjecture was proved
independently by Mulay \cite{Mu02} and DeMeyer and Schneider \cite{DS02}. Also, in \cite{Re01}, Redmond proved that if $\overline{\Gamma}(R)$ (the undirected zero-divisor graph of a non-commutative ring) contains a cycle, then ${\rm gr}(\overline{\Gamma}(R))\leq 4$. The following is our first main result in this section which shows that for a (not necessarily commutative) semigroup $S$, if $\overline{\Gamma}(S)$ contains a cycle, then $gr(\overline{\Gamma}(S))\leq 4$.

\begin{ttheo}\label{girth}
Let $S$ be a semigroup. If $\overline{\Gamma}(S)$ contains a cycle, then $gr(\overline{\Gamma}(S))\leq 4$.
\end{ttheo}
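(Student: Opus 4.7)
I argue by contradiction. Assume that ${\rm gr}(\overline{\Gamma}(S)) \geq 5$ and let $v_{1}-v_{2}-\cdots-v_{n}-v_{1}$ be a shortest cycle in $\overline{\Gamma}(S)$; then $n \geq 5$. Minimality of the cycle forces it to be chordless: for every pair of non-consecutive vertices $v_{i},v_{j}$ on the cycle one has $v_{i}v_{j} \neq 0$ and $v_{j}v_{i} \neq 0$, since any chord would close a strictly shorter cycle. Moreover, for every cycle edge $v_{i}v_{i+1}$ at least one of $v_{i}v_{i+1}$ and $v_{i+1}v_{i}$ vanishes, and the main case-split will be on the two ``orientations'' of each of the four consecutive edges $v_{1}v_{2}$, $v_{2}v_{3}$, $v_{3}v_{4}$, $v_{4}v_{5}$.

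The central idea is to produce a new vertex by multiplying two cycle vertices and exhibit that it is simultaneously adjacent to two already-existing cycle vertices. I introduce $u = v_{1}v_{3}$ and $u' = v_{3}v_{1}$, both nonzero by chordlessness. Associativity gives $uv_{2} = v_{1}(v_{3}v_{2})$, $v_{2}u = (v_{2}v_{1})v_{3}$, $uv_{4} = v_{1}(v_{3}v_{4})$, and $v_{4}u = (v_{4}v_{1})v_{3}$ with $v_{4}v_{1} \neq 0$; hence $u$ is adjacent to $v_{2}$ whenever $v_{3}v_{2}=0$ or $v_{2}v_{1}=0$, and to $v_{4}$ whenever $v_{3}v_{4}=0$. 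Symmetrically, $u'$ is adjacent to $v_{2}$ whenever $v_{1}v_{2}=0$ or $v_{2}v_{3}=0$, and to $v_{4}$ whenever $v_{4}v_{3}=0$. Since the edge $v_{3}v_{4}$ forces at least one of $v_{3}v_{4}=0$ or $v_{4}v_{3}=0$, and the edges $v_{1}v_{2}$ and $v_{2}v_{3}$ together always supply at least one of the four sufficient conditions above for the $v_{2}$-adjacency of $u$ or $u'$, a short case check shows that in every orientation at least one of $u$, $u'$ is adjacent to both $v_{2}$ and $v_{4}$. Together with the cycle path $v_{2}-v_{3}-v_{4}$, this produces a $4$-cycle of the form $v_{2}-u-v_{4}-v_{3}-v_{2}$ (or the $u'$ analogue), contradicting ${\rm gr}(\overline{\Gamma}(S)) \geq 5$.

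The main obstacle is to verify that the constructed element is distinct from $v_{2}, v_{3}, v_{4}$ in each case. Chord-freeness immediately rules out $u=v_{2}$ (which would give $v_{2}v_{4} = v_{1}(v_{3}v_{4})=0$, a chord) and $u=v_{4}$ (which would give $v_{2}v_{4}=0$ or $v_{4}v_{2}=0$, a chord); symmetric arguments dispose of $u'\in\{v_{2},v_{4}\}$. The delicate subcase is $u = v_{3}$, that is $v_{1}v_{3} = v_{3}$, in which the alleged $4$-cycle degenerates to the path $v_{2}-v_{3}-v_{4}$ (and similarly for $u' = v_{3}$). In this degenerate subcase I will replace $u$ by an auxiliary element built from the further cycle vertices, such as $v_{2}v_{4}$, $v_{4}v_{2}$, $v_{3}v_{5}$ or $v_{5}v_{3}$, and use the remaining edge conditions on $v_{4}v_{5}$ and $v_{5}v_{1}$ to produce either a triangle (for example $v_{1}-v_{2}v_{4}-v_{5}-v_{1}$ when both $v_{1}v_{2}=0$ and $v_{4}v_{5}=0$) or a $4$-cycle through $v_{5}$. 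If in turn such an auxiliary element coincides with a cycle vertex, the resulting identity (e.g.\ $v_{3}^{2}=0$ or $v_{4}v_{2}=v_{3}$) can be fed back into the computation to yield a shorter closed walk elsewhere. The bookkeeping of these orientation cases and their degenerate variants is the main technical hurdle; once it is completed, we obtain a cycle of length at most $4$, contradicting $n\geq 5$ and proving ${\rm gr}(\overline{\Gamma}(S)) \leq 4$.
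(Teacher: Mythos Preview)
Your central claim---that ``in every orientation at least one of $u$, $u'$ is adjacent to both $v_{2}$ and $v_{4}$''---is false, and this is a genuine gap, not merely a bookkeeping issue. Take the orientation $v_{1}v_{2}=0$, $v_{2}v_{3}=0$, $v_{3}v_{4}=0$ (with the reverse products $v_{2}v_{1}$, $v_{3}v_{2}$, $v_{4}v_{3}$ all nonzero). Then for $u=v_{1}v_{3}$ you get $uv_{4}=v_{1}(v_{3}v_{4})=0$, but $uv_{2}=v_{1}(v_{3}v_{2})$ and $v_{2}u=(v_{2}v_{1})v_{3}$ are not forced to vanish; for $u'=v_{3}v_{1}$ you get $u'v_{2}=v_{3}(v_{1}v_{2})=0$, but $u'v_{4}=v_{3}(v_{1}v_{4})$ and $v_{4}u'=(v_{4}v_{3})v_{1}$ are not forced to vanish (recall $v_{1}v_{4}\neq 0$ by chordlessness). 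Thus $u$ is linked only to $v_{4}$ and $u'$ only to $v_{2}$, and no $4$-cycle through $v_{2},v_{3},v_{4}$ arises. Your degenerate-case machinery does not cover this: it is invoked only when $u$ or $u'$ coincides with $v_{3}$, not when the adjacency itself fails. The symmetric ``all-backward'' orientation fails in the same way. Separately, the treatment of the degenerate equalities (``the resulting identity \dots\ can be fed back'') is only a sketch and would need to be made explicit.

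For comparison, the paper does not multiply nearby vertices $v_{1},v_{3}$; it multiplies $a_{2}$ and $a_{n-1}$, the two vertices flanking the fixed edge $a_{n}\!-\!a_{1}$ from opposite sides. It argues that $a_{2}a_{n-1}$ and $a_{n-1}a_{2}$ are each adjacent to both $a_{1}$ and $a_{n}$, hence must lie in $\{a_{1},a_{n}\}$ (otherwise a triangle $a_{1}\!-\!a_{2}a_{n-1}\!-\!a_{n}\!-\!a_{1}$ appears). The resulting \emph{equalities}, rather than mere adjacencies, are then combined with the edge $a_{2}\!-\!a_{3}$ in a four-case split: e.g.\ if $a_{n-1}a_{2}=a_{n}$ and $a_{2}a_{3}=0$, then $a_{n}a_{3}=(a_{n-1}a_{2})a_{3}=0$ yields a $4$-cycle $a_{1}\!-\!a_{2}\!-\!a_{3}\!-\!a_{n}\!-\!a_{1}$. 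The point is that these identities transport the $a_{2}\!-\!a_{3}$ edge directly to an $a_{1}$- or $a_{n}$-edge regardless of its orientation, which is exactly what your construction with $v_{1}v_{3}$ cannot do in the uniformly oriented case.
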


\begin{pproof}
Let $ a_{1}-a_{2}-\cdots -a_{n-1}-a_{n}-a_{1}$ be a cycle of shortest length
in $\overline{\Gamma}(S)$. Assume that
$gr(\overline{\Gamma}(S)) > 4$, i.e., assume $n\geq 5$. Note that $a_2a_{n-1} \neq 0$ and $a_{n-1}a_2 \neq 0$ ( as $n\geq 5$). If $a_2 a_{n-1}\not \in \{a_1, a_n\}$, then $a_1-a_2a_{n-1}-a_n-a_1$ is a cycle of length 3, yielding a contradiction. Also, if  $a_{n-1}a_2\not \in \{a_1, a_n\}$, then $a_1-a_{n-1}a_2-a_n-a_1$ is a cycle of length 3, yielding a contradiction. We have the following cases:

$Case ~1:$ $a_2a_{n-1}=a_1$ and $a_{n-1}a_2=a_n$. If $a_2a_3=0$, then $a_n a_3=(a_{n-1}a_2)a_3=0$. Therefore, $a_1-a_2-a_3-a_n-a_1$ is a cycle of length 4, yielding a contradiction. So, $a_3a_2=0$. Thus, $a_3a_1=a_3(a_2 a_{n-1})=0$. Therefore, $a_1-a_3-a_4-\cdots - a_{n-1}-a_n-a_1$ is a cycle of length $n-1$, yielding a contradiction.

$Case ~2:$ $a_2a_{n-1}=a_1$ and $a_{n-1}a_2=a_1$. If $a_2a_3=0$, then $a_1 a_3=(a_{n-1}a_2)a_3=0$. Therefore, $a_1-a_3-a_4-\cdots - a_{n-1}-a_n-a_1$ is a cycle of length $n-1$, yielding a contradiction. So, $a_3a_2=0$. Thus, $a_3a_1=a_3(a_2 a_{n-1})=0$. Therefore, $a_1-a_3-a_4-\cdots - a_{n-1}-a_n-a_1$ is a cycle of length $n-1$, yielding a contradiction.

$Case ~3:$ $a_2a_{n-1}=a_n$ and $a_{n-1}a_2=a_1$. If $a_2a_3=0$, then $a_1 a_3=(a_{n-1}a_2)a_3=0$. Therefore, $a_1-a_3-a_4-\cdots - a_{n-1}-a_n-a_1$ is a cycle of length $n-1$, yielding a contradiction.  So, $a_3a_2=0$. Thus, $a_3a_n=a_3(a_2 a_{n-1})=0$.  Therefore, $a_1-a_2-a_3-a_n-a_1$ is a cycle of length 4, yielding a contradiction.

$Case ~4:$ $a_2a_{n-1}=a_n$ and $a_{n-1}a_2=a_n$. If $a_2a_3=0$, then $a_n a_3=(a_{n-1}a_2)a_3=0$. If $a_3a_2=0$, then $a_3a_n=a_3(a_2 a_{n-1})=0$. Therefore, $a_1-a_2-a_3-a_n-a_1$ is a cycle of length $4$, yielding a contradiction.

Since in all cases we have found contradictions, we conclude that if $\overline{\Gamma}(S)$ contains a cycle, then $gr(\overline{\Gamma}(S)) \leq 4$.\hfill $\square$

\end{pproof}

\begin{ccoro}\label{diam3}
 Let $R$ be a ring. Then
$\overline{\Bbb{APOG}}(R)$ is a connected graph and
${\rm diam}(\overline{\Bbb{APOG}}(R)) \leq 3.$ Moreover,  If
$\overline{\Bbb{APOG}}(R)$ contains a cycle, then ${\rm gr}(\overline{\Bbb{APOG}}(R)) \leq 4$.
\end{ccoro}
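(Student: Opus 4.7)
The plan is to observe that this corollary is an immediate specialization of Theorems \ref{diam} and \ref{girth} to the semigroup $\Bbb{IPO}(R)$. By Proposition 2.1, $\Bbb{IPO}(R)$ is a semigroup with $0$, and by the very definition given just before the statement, $\overline{\Bbb{APOG}}(R) = \overline{\Gamma}(\Bbb{IPO}(R))$. Therefore, setting $S = \Bbb{IPO}(R)$ in Theorem \ref{diam} gives immediately that $\overline{\Bbb{APOG}}(R)$ is connected and that ${\rm diam}(\overline{\Bbb{APOG}}(R)) = {\rm diam}(\overline{\Gamma}(\Bbb{IPO}(R))) \leq 3$.

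For the girth statement I would invoke Theorem \ref{girth} with the same substitution $S = \Bbb{IPO}(R)$: if $\overline{\Bbb{APOG}}(R) = \overline{\Gamma}(\Bbb{IPO}(R))$ contains a cycle, then ${\rm gr}(\overline{\Bbb{APOG}}(R)) \leq 4$. The only housekeeping is the case where the graph is empty or has no zero-divisors to consider, but then both assertions hold vacuously (the empty graph is trivially connected with diameter $0$ and contains no cycle, so the girth hypothesis is not triggered), so no extra argument is required.

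There is essentially no technical obstacle here; the entire content of the corollary has already been carried out in the semigroup-theoretic generality of the two preceding theorems, and Proposition 2.1 supplies the one ingredient needed to move from semigroups to rings, namely the fact that $\Bbb{IPO}(R)$ really is a semigroup. Thus the proof amounts to stating these three facts in the correct order and citing Theorems \ref{diam} and \ref{girth}.
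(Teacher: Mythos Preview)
Your proposal is correct and matches the paper's own proof essentially verbatim: the paper simply notes that $\overline{\Bbb{APOG}}(R)=\overline{\Gamma}(\Bbb{IPO}(R))$ and then invokes Theorem~\ref{diam} for connectedness and diameter and Theorem~\ref{girth} for the girth bound. The only difference is that you explicitly mention Proposition~2.1 and the vacuous empty-graph case, neither of which the paper bothers to state.
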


\begin{pproof}
Note that  $\overline{\Bbb{APOG}}(R)$ is equal to $\overline{\Gamma}(\Bbb{IPO}(R))$. So by Theorem \ref{diam}, $\overline{\Bbb{APOG}}(R)$ is a connected graph and ${\rm diam}(\overline{\Bbb{APOG}}(R)) \leq 3$. Also, by Theorem \ref{girth},  if
$\overline{\Bbb{APOG}}(R)$ contains a cycle, then ${\rm gr}(\overline{\Bbb{APOG}}(R)) \leq 4$.\hfill $\square$
\end{pproof}
\\

For a not necessarily commutative ring $R$, we define a simple undirected graph $\overline{\Gamma}(R)$ with vertex set $D(R)^*$ (the set of all non-zero zero-divisors of $R$) in which two distinct
vertices $x$ and $y$ are adjacent if and only if either $xy = 0$ or $yx = 0$ (see  \cite{Re01}).  The Jacobson radical of $R$, denoted by $J(R)$, is equal to the intersection of all maximal right ideals of  $R$. It is well-known that $J(R)$ is also equal to the intersection of all maximal left ideals of $R$.    In our second main theorem in this section we characterize rings whose undirected annihilating-ideal graphs are complete graphs.

\begin{ttheo}
Let $R$ be a ring. Then $\overline{\Bbb{APOG}}(R)$ is a complete graph if and only if either $(D(R))^{2}=0$, or $R$ is a direct product of two division rings, or $R$ is a local ring with maximal ideal $\mathfrak{m}$ such that $\Bbb{IPO}(R)=\{0,\mathfrak{m},\mathfrak{m}^{2}, R\}$.
\end{ttheo}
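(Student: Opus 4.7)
My plan is to handle the two directions separately, with the forward implication being the substantial one.

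For the backward direction, I would verify each of the three listed cases individually. If $(D(R))^{2}=0$, then any vertex $A$ of $\overline{\Bbb{APOG}}(R)$ is a nonzero proper element of $\Bbb{IPO}(R)$ with a nonzero one-sided annihilator in $\Bbb{IPO}(R)$, which forces $A\subseteq D(R)$; hence $AB\subseteq (D(R))^{2}=0$ for any two vertices, so the graph is complete. If $R=D_{1}\times D_{2}$ with each $D_{i}$ a division ring, then the only one-sided ideals of $R$ are the obvious four, so $\Bbb{IPO}(R)=\{0,D_{1}\times 0,0\times D_{2},R\}$ and the two vertices multiply to zero. In the local case, the vertices are $\mathfrak{m}$ and $\mathfrak{m}^{2}$; since $\mathfrak{m}^{3}\in\Bbb{IPO}(R)=\{0,\mathfrak{m},\mathfrak{m}^{2},R\}$ and $\mathfrak{m}^{3}\subsetneq\mathfrak{m}$, and $\mathfrak{m}$ must be a genuine vertex, one is forced to $\mathfrak{m}^{3}=0$, giving adjacency of $\mathfrak{m}$ and $\mathfrak{m}^{2}$.

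For the forward direction, assume $\overline{\Bbb{APOG}}(R)$ is complete and split on whether $R$ has a nontrivial idempotent. If $e^{2}=e\in R\setminus\{0,1\}$, I would first show $e$ is central: both $Re$ and $eR$ are proper nonzero members of $\Bbb{IPO}(R)$, and they are vertices because $(Re)((1-e)R)=0$ and $((1-e)R)(Re)=0$. If they were distinct, completeness would demand that one of $(Re)(eR)=ReR$ or $(eR)(Re)=eRe$ vanishes, but both contain $e\ne 0$; hence $Re=eR$, so $e$ is central and $R=R_{1}\times R_{2}$. I would then argue each $R_{i}$ is a division ring: if $R_{1}$ had a nonzero proper left ideal $I$, the distinct vertices $I\times 0$ and $R_{1}\times 0$ would fail to annihilate either way, since $(I\times 0)(R_{1}\times 0)=IR_{1}\times 0\supseteq I\times 0\neq 0$ and $(R_{1}\times 0)(I\times 0)=I\times 0\neq 0$, contradicting completeness; a parallel computation handles right ideals and rules out decompositions with three or more factors. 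This yields case (ii).

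Otherwise $R$ has no nontrivial idempotent, and we may assume $(D(R))^{2}\neq 0$ (else case (i)). The goal is to show $R$ is local with $\Bbb{IPO}(R)=\{0,\mathfrak{m},\mathfrak{m}^{2},R\}$. My strategy is: first, establish locality by showing that if $M_{1}\neq M_{2}$ were two distinct maximal one-sided ideals, both would appear as vertices, completeness would force $M_{1}M_{2}=0$ or $M_{2}M_{1}=0$, and combined with $M_{1}+M_{2}=R$ this would produce a nontrivial idempotent, contradicting our hypothesis. Second, within this now-local $R$, any proper nonzero $A\in\Bbb{IPO}(R)$ lies in $\mathfrak{m}$ and is a vertex (since $\mathfrak{m}^{k}=0$ for some $k$ once completeness is applied to $\mathfrak{m}$ with itself via $\mathfrak{m}^{2}$), so pairwise adjacency of $A$ with both $\mathfrak{m}$ and $\mathfrak{m}^{2}$ pins the chain of admissible powers to length exactly two, forcing $\Bbb{IPO}(R)=\{0,\mathfrak{m},\mathfrak{m}^{2},R\}$. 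The \emph{main obstacle} I anticipate is the idempotent-extraction step inside a non-local ring without nontrivial idempotents: producing a nontrivial idempotent from the pair $(M_{1},M_{2})$ requires careful juggling of the one-sided annihilation relations, because the completeness hypothesis only guarantees annihilation in one direction for each pair, so one must track both sides simultaneously and exploit that the set of vertices is closed under enough algebraic operations to generate the desired orthogonal pair $e,1-e$.
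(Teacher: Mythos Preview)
Your backward direction and the idempotent half of the forward direction are sound, and the latter is a genuinely different route from the paper's. The paper never splits on idempotents; it reduces to the element-level zero-divisor graph $\overline{\Gamma}(R)$, invokes the Akbari--Mohammadian classification of when that graph is complete, and otherwise picks non-adjacent zero-divisors $x,y$ and argues directly that $\{zR,Rx\}$ is the entire set of proper nonzero one-sided ideals of $R$ for a suitable $z$, after which the DCC criterion (Corollary~\ref{Artin2}) gives Artinianness and Brauer's Lemma finishes. Your idempotent argument bypasses all of that by forcing $Re=eR$ straight from completeness, which is cleaner; note only that $Re=eR$ does not literally say $e$ is central, though the deduction $(1-e)Re=0=eR(1-e)$ is an easy extra line.

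The no-idempotent case, however, has a genuine gap, and it is not the obstacle you flagged. The idempotent extraction is routine: if $M_1M_2=0$ and $M_1+M_2=R$, writing $1=m_1+m_2$ gives $m_1=m_1(m_1+m_2)=m_1^2$ immediately. The real problem is upstream: you assert that two distinct maximal one-sided ideals ``would appear as vertices'', but nothing in your hypotheses produces a nonzero annihilator for an arbitrary maximal one-sided ideal. Completeness of $\overline{\Bbb{APOG}}(R)$ is a statement about the vertices that already exist; it does not manufacture new ones. The paper handles this by first showing, via a careful element-level argument, that $R$ has only two proper nonzero one-sided ideals, and only then invoking the Artinian criterion. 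Your outline offers no substitute for this finiteness step. The same issue recurs at the end: even granting locality and $\mathfrak m^3=0$, adjacency of a vertex $A$ to $\mathfrak m$ only yields $A\mathfrak m=0$ or $\mathfrak m A=0$, which does not force $A\in\{\mathfrak m,\mathfrak m^2\}$; ruling out other one-sided ideals again needs a finiteness input you have not supplied.
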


\begin{pproof}
Assume that $\overline{\Bbb{APOG}}(R)$ is a complete graph. If $\overline{\Gamma}(R)$ is a complete graph, then by \cite[Theorem 5]{AM042},  either $R\cong \Bbb{Z}_2 \times \Bbb{Z}_2$ or $D(R)^2=\{0\}$. So the forward direction holds. Next assume that $\overline{\Gamma}(R)$ is not a complete graph. So there exist different vertices $x$ and $y$ of $\overline{\Gamma}(R)$ such that $x$ and $y$ are not adjacent. We have the following cases:

$Case~1$: $x\in A^{r}(R)$. Without loss of generality assume that $y\in A^r(R)$. If $Rx\neq Ry$, then since $\Bbb{APOG}(R)$ is a complete graph, we have $Rx$ is adjacent to $Ry$ in $\overline{\Bbb{APOG}}(R)$, so $x$ and $y$ are adjacent in $\overline{\Gamma}(R)$, yielding a contradiction. Thus $Rx=Ry$. Since $x\in A^{r}(R)$, there exists non-zero element $z\in D(R)$ such that $xz=0$. If $Rx\subseteq zR$, then $(Rx)^2=\{0\}$. So $(Rx)(Ry)=\{0\}$, and $x$ and $y$ are adjacent in $\overline{\Gamma}(R)$, yielding a cintradiction. Therefore, $Rx \nsubseteq zR$.  If there exists a left or right ideal $I$ of $R$ expect $zR$ such that $I\nsubseteq Rx$, then there exists nonzero element $s\in I\setminus Rx$.  Then $(Rs+Rx) (zR)=\{0\}$. Since $\overline{\Bbb{APOG}}(R)$ is a complete graph $Rx$ is adjacent to $(Rs+Rx)=\{0\}$. Thus $(Rx)^{2}=\{0\}$, and so  $x$ and $y$ are adjacent in $\overline{\Gamma}(R)$, yielding a contradiction. Therefore, $\{zR,Rx\}$ is the set of nonzero proper left or right ideals of $R$. Thus by Corollary \ref{Artin2}, $R$ is an Artinian ring. We have the following subcases:

$Subcase~1$: $zR\nsubseteq Rx$. Then $zR$ and $Rx$ are maximal ideals. If $zR$ or $Rx$ is not a two-sided ideal, then $zR=J(R)=Rx$, yielding a contradiction. Therefore, $Rx$ and $zR$ are two-sided ideals. Also, $Rx$ and $zR$ are minimal ideals and so  $Rx\cap zR=\{0\}$. Thus by Brauer's Lemma (see \cite[10.22]{La91}), $(Rx)^2=0$ or $Rx=Re$, where $e$ is a idempotent in $R$. If $(Rx)^2=\{0\}$, then $x$ is adjacent to $y$ in $\overline{\Gamma}(R)$, yielding a contradiction. So $Rx=Re$, where $e$ is an idempotent in $R$. Therefore, $R=eRe\oplus eR(1-e)\oplus (1-e)Re\oplus (1-e)R(1-e)$. Since $\{zR,Rx\}$ is the set of nonzero proper left or right ideals of $R$ and $Rx\cap zR=\{0\}$, we conclude that $Re=Rx=eR$ and $(1-e)R=zR=R(1-e)$. Therefore,  $(1-e)Re=(1-e)eR=\{0\}$ and $eR(1-e)=e(1-e)R=\{0\}$.
 So $R=eRe\oplus(1-e)R(1-e)$.   Since $R$ is an Artinian ring with two nonzero left or right ideals, we conclude that $eRe$ and $(1-e)R(1-e)$ are division rings.

$Subcase~2$: $zR\subseteq Rx$. Then $Rx=D(R)$. If $(Rx)^2=\{0\}$, then $x$ is adjacent to $y$ in $\overline{\Gamma}(R)$, yielding a contradiction. If $D(R)^2\neq 0$, then $D(R)^2=zR$. Therefore, $R$ is a local ring with maximal ideal $\mathfrak{m}$ such that $\Bbb{IPO}(R)=\{0,\mathfrak{m},\mathfrak{m}^{2}, R\}$.

In summary, we obtain that either $R$ is a direct product of two division rings, or $R$ is a local ring with maximal ideal $\mathfrak{m}$ such that $\Bbb{IPO}(R)=\{0,\mathfrak{m},\mathfrak{m}^{2}, R\}$. Thus the forward direction holds.

$Case~2$:  $x\in A^{l}(R)$. Similar to Case 1, we  conclude that either $R$ is a direct product of two division rings, or $R$ is a local ring with maximal ideal $\mathfrak{m}$ such that $\Bbb{IPO}(R)=\{0,\mathfrak{m},\mathfrak{m}^{2}, R\}$. So the forward direction holds.\\

The converse is obvious. \hfill $\square$

\end{pproof}


\section{Undirected Annihilating-Ideal Graphs for Matrix Rings Over Commutative Rings }

In this section we investigate the undirected annihilating-ideal graphs of matrix rings over  commutative rings. By Theorem \ref{diam3}, ${\rm diam}(\overline{\Bbb{APOG}}(R)) \leq 3$ for any ring $R$. In Proposition~\ref{mdiam} we show that ${\rm diam}((\overline{\Bbb{APOG}}(M_{n}(R)))\geq 2$ where $n\geq 2$. A natural question is whether or not  ${\rm diam}(\overline{\Bbb{APOG}}(M_{n}(R))\geq {\rm diam}(\overline{\Bbb{APOG}}(R))$.  We show that the answer to this question is affirmative.

\begin{ppro}\label{mdiam}
 Let $R$ be a commutative ring. Then ${\rm diam}(\overline{\Bbb{APOG}}(M_{n}(R))\geq 2$ where $n\geq 2$.
\end{ppro}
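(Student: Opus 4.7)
The plan is to exhibit two distinct vertices of $\overline{\Bbb{APOG}}(M_n(R))$ that are non-adjacent; this immediately gives a pair at distance at least $2$, and hence ${\rm diam}(\overline{\Bbb{APOG}}(M_n(R)))\ge 2$. By Corollary \ref{diam3} the graph is already known to be connected with diameter at most $3$, so the whole task is to show that the graph fails to be complete.

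The natural candidates are the two principal left ideals $A=M_n(R)e_{11}$ and $B=M_n(R)e_{22}$, where the $e_{ij}$ are the standard matrix units. Both lie in $\Bbb{IPO}(M_n(R))$, since any one-sided ideal $I$ equals $M_n(R)\cdot I$. Both are one-sided zero-divisors of the semigroup $\Bbb{IPO}(M_n(R))$: the right ideal $(1-e_{11})M_n(R)$ is nonzero (because $n\ge 2$) and is annihilated from the left by $A$, since every generating product $(Xe_{11})((1-e_{11})Y)=Xe_{11}(1-e_{11})Y$ vanishes; a symmetric argument with $(1-e_{22})M_n(R)$ handles $B$. They are distinct because $e_{11}=e_{11}e_{11}\in A$ while every matrix in $B$ has zero first column, so $e_{11}\notin B$.

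The key step is then the non-adjacency, namely that $AB\ne 0$ and $BA\ne 0$. Products in $\Bbb{IPO}(M_n(R))$ are ideal products (finite sums of ring products $ab$ with $a\in A$, $b\in B$), so it suffices to exhibit a single nonzero such product inside each of $AB$ and $BA$. This is easy via matrix units: $e_{11}\in A$ and $e_{12}=e_{12}e_{22}\in B$ give $e_{11}\cdot e_{12}=e_{12}\in AB$, while $e_{22}\in B$ and $e_{21}=e_{21}e_{11}\in A$ give $e_{22}\cdot e_{21}=e_{21}\in BA$. Therefore $A\ne B$ are non-adjacent vertices, and $d(A,B)\ge 2$, as required. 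There is no real obstacle; the argument is pure matrix-unit bookkeeping, and the only pitfall is to remember that the operation in $\Bbb{IPO}(M_n(R))$ is the ideal product, not a set-theoretic intersection or elementwise product, so producing one nonzero $ab$ in each direction already suffices.
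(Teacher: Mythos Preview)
Your proof is correct and follows essentially the same strategy as the paper: exhibit two non-adjacent vertices of $\overline{\Bbb{APOG}}(M_n(R))$ built from matrix-unit principal one-sided ideals. The only difference is the choice of ideals---you take the two left ideals $M_n(R)e_{11}$ and $M_n(R)e_{22}$ and produce separate witnesses for $AB\neq 0$ and $BA\neq 0$, whereas the paper pairs the left ideal $A=M_n(R)e_{11}$ with the right ideal $B=e_{11}M_n(R)$, so that the single observation $e_{11}\in A\cap B$ with $e_{11}^2\neq 0$ disposes of both products at once.
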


\begin{pproof}
Let $$A=(M_{n}(R)\begin{bmatrix}
1 & 0 & 0 & \cdots & 0\\
0 & 0 & 0 & \cdots & 0\\
\vdots & \vdots & \vdots & \cdots & \vdots \\
0 & 0 & 0 & \cdots & 0
\end{bmatrix}) ~{\rm and}~ B=(\begin{bmatrix}
1 & 0 & 0 & \cdots & 0\\
0 & 0 & 0 & \cdots & 0\\
\vdots & \vdots & \vdots & \cdots & \vdots \\
0 & 0 & 0 & \cdots & 0
\end{bmatrix}M_{n}(R)).$$
Since
$$A(\begin{bmatrix}
0 & 0 & 0 & \cdots & 0\\
1 & 0 & 0 & \cdots & 0\\
\vdots & \vdots & \vdots & \cdots & \vdots \\
0 & 0 & 0 & \cdots & 0
\end{bmatrix}M_{n}(R))=0 ~{\rm and} ~(M_{n}(R)\begin{bmatrix}
0 & 0 & 0 & \cdots & 0\\
1 & 0 & 0 & \cdots & 0\\
\vdots & \vdots & \vdots & \cdots & \vdots \\
0 & 0 & 0 & \cdots & 0
\end{bmatrix})B=0, $$
we conclude that $A$ and $B$ are vertices in $(\overline{\Bbb{APOG}}(M_{n}(R))$.
Note that
 $$\begin{bmatrix}
1 & 0 & 0 & \cdots & 0\\
0 & 0 & 0 & \cdots & 0\\
\vdots & \vdots & \vdots & \cdots & \vdots \\
0 & 0 & 0 & \cdots & 0
\end{bmatrix}^{2}\neq 0 ~{\rm and}~
\begin{bmatrix} 1 & 0 & 0 & \cdots & 0\\
0 & 0 & 0 & \cdots & 0\\
\vdots & \vdots & \vdots & \cdots & \vdots \\
0 & 0 & 0 & \cdots & 0
\end{bmatrix}\in A\cap B,$$ so $AB\neq 0$. Therefore, ${\rm diam}(\overline{\Bbb{APOG}}(M_{n}(R))\geq 2$.\hfill $\square$
\end{pproof}

\begin{ttheo}
 Let $R$ be a commutative ring. Then ${\rm diam}(\overline{\Bbb{APOG}}(M_{n}(R))\geq {\rm diam}(\Bbb{AG}(R))={\rm diam}(\overline{\Bbb{APOG}}(R))$.
\end{ttheo}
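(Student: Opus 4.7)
The result splits into the equality $\mathrm{diam}(\Bbb{AG}(R))=\mathrm{diam}(\overline{\Bbb{APOG}}(R))$ and the inequality $\mathrm{diam}(\overline{\Bbb{APOG}}(M_n(R)))\geq \mathrm{diam}(\Bbb{AG}(R))$, which I would treat separately. The equality is a direct unwinding of definitions: for commutative $R$ every ideal $I$ equals $I\cdot R$, and the product of two ideals is again an ideal, so $\Bbb{IPO}(R)$ is simply the set of all ideals of $R$; under this identification, the adjacency relation on $\overline{\Gamma}(\Bbb{IPO}(R))$ is just $IJ=0$, so $\overline{\Bbb{APOG}}(R)=\Bbb{AG}(R)$ as graphs.

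For the inequality, my plan is to use the embedding $I\mapsto M_n(I)$ of vertices of $\Bbb{AG}(R)$ into $\overline{\Bbb{APOG}}(M_n(R))$ (note $M_n(I)=M_n(R)\cdot M_n(I)\in\Bbb{IPO}(M_n(R))$, and $IK=0$ with $K\neq 0$ yields $M_n(I)M_n(K)=0$) and to prove the pointwise bound
$$
d_{\Bbb{AG}(R)}(I,J)\;\leq\;d_{\overline{\Bbb{APOG}}(M_n(R))}\bigl(M_n(I),M_n(J)\bigr)
$$
for every pair of vertices $I,J$; taking suprema then delivers the diameter inequality. Since Corollary \ref{diam3} forces both diameters to lie in $\{0,1,2,3\}$, it suffices to verify this bound when the right-hand side equals $0,1,$ or $2$. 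The cases $0$ and $1$ are immediate from $M_n(I)M_n(J)=M_n(IJ)$. The essential case is distance $2$: given an intermediate vertex $A\in\Bbb{IPO}(M_n(R))\setminus\{0\}$ with $M_n(I)A=0$ or $AM_n(I)=0$, and similarly for $J$, I would prove the lemma that such an $A$ must satisfy $A\subseteq M_n(\mathrm{Ann}_R(I))$. This is a matrix-unit computation: the elements $iE_{jk}\in M_n(I)$ with $i\in I$ act on $a\in A$ so as to force $i\cdot a_{kl}=0$ for every entry $a_{kl}$. Applied at both endpoints, this gives $A\subseteq M_n(K)$ with $K:=\mathrm{Ann}_R(I)\cap \mathrm{Ann}_R(J)$; $K$ is nonzero (since $A\neq 0$) and proper (since $IK=0\neq I$), and one concludes either that $I-K-J$ is a path of length $2$ in $\Bbb{AG}(R)$, or, in the degenerate case $K\in\{I,J\}$, that there is a direct edge $I$--$J$.

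The main obstacle to resist is the naive projection sending $A\in \Bbb{IPO}(M_n(R))$ to the ideal of $R$ generated by the entries of its matrices; this does \emph{not} define a graph homomorphism to $\Bbb{AG}(R)$, as witnessed in $M_2(R)$ by $M_2(R)E_{22}\cdot E_{11}M_2(R)=0$, where both factors have entry ideal equal to $R$. What makes the argument succeed is the special form of the endpoint $M_n(I)$: being a two-sided ideal of the shape $M_n(\cdot)$, it allows the matrix-unit trick to pin down every entry of every element of an adjacent $A$, regardless of how wild $A$ itself is inside $\Bbb{IPO}(M_n(R))$. So one never has to control paths that pass entirely through generic elements of $\Bbb{IPO}(M_n(R))$; it is enough to analyse adjacency with one matrix-ideal endpoint at a time.
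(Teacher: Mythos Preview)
Your argument is correct and the core mechanism---extracting from an intermediate vertex $A$ adjacent to $M_n(I)$ an ideal of $R$ that annihilates $I$, via multiplication by $iE_{jk}$---is exactly what the paper uses. The organisation, however, is genuinely different. The paper does a case split on the value of $\mathrm{diam}(\Bbb{AG}(R))$: when it is at most $2$ the inequality follows from the separate Proposition~\ref{mdiam} (which exhibits two concrete non-adjacent vertices in $\overline{\Bbb{APOG}}(M_n(R))$), and only the case $\mathrm{diam}(\Bbb{AG}(R))=3$ is handled by the entry-extraction argument, phrased as a proof by contradiction and using a single nonzero entry $a_{11}$ to form the bridge $(a_{11}R)$. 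Your route instead proves the uniform pointwise bound $d_{\Bbb{AG}(R)}(I,J)\le d_{\overline{\Bbb{APOG}}(M_n(R))}(M_n(I),M_n(J))$ and takes suprema; this is cleaner, makes the role of the embedding $I\mapsto M_n(I)$ explicit, and dispenses with Proposition~\ref{mdiam} altogether. The paper's version is marginally shorter because it outsources the small-diameter cases, but yours is self-contained and handles all four orientation combinations for the two adjacencies at once via the annihilator-intersection $K=\mathrm{Ann}_R(I)\cap\mathrm{Ann}_R(J)$, whereas the paper writes down only the orientation $M_n(I)\alpha=\alpha M_n(J)=0$ (harmless here since $R$ is commutative, but your treatment is more transparent on this point).
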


\begin{pproof}
By \cite[Theorem 2.1]{BRI11}, ${\rm diam} (\Bbb{AG}(R))\leq 3$.

$Case$ 1: $diam(\Bbb{AG}(R))\leq 2$. By Proposition \ref{mdiam}, ${\rm diam}(\overline{\Bbb{APOG}}(M_{n}(R))\geq 2$. Thus ${\rm diam} (\overline{\Bbb{APOG}}(M_{n}(R))\geq {\rm diam}(\Bbb{AG}(R))$.

$Case$ 2: ${\rm diam}(\Bbb{AG}(R))=3$. Then there exist vertices  $I,J,K$, and $L$ of $\Bbb{AG}(R)$ such that $I-K-L-J$ is a shortest path between $I$ and $J$. So $d(I, J)=3$.  Since $I$ and $J$ are vertices of $\Bbb{AG}(R)$, $M_n(I)$ and $M_n(J)$ are vertices of  $\overline{\Bbb{APOG}}(M_n(R))$. Suppose that  ${\rm diam}(\overline{\Bbb{APOG}}(M_{n}(R))=2$. So we can assume that there exists $\alpha=[a_{ij}]\in M_{n}(R)$ such that $M_{n}(I)\alpha=\alpha M_{n}(J)=0$. Without loss of generality, we may  assume that $a_{11}\neq 0$. For every $a\in I$,
$$
\begin{bmatrix}
a & 0 & 0 & \cdots & 0\\
0 & 0 & 0 & \cdots & 0\\
\vdots & \vdots & \vdots & \cdots & \vdots \\
0 & 0 & 0 & \cdots & 0
\end{bmatrix}A=0,$$ so $aa_{11}=0$. Therefore $I(a_{11}R)=0$. For every $b\in J$,
$$A\begin{bmatrix}
b & 0 & 0 & \cdots & 0\\
0 & 0 & 0 & \cdots & 0\\
\vdots & \vdots & \vdots & \cdots & \vdots \\
0 & 0 & 0 & \cdots & 0
\end{bmatrix}=0.$$ Therefore  $(a_{11}R)J=0$. Thus $I-(a_{11}R)-J$ is a path of length 2 in $\Bbb{AG}(R)$, and so $d(I,J) \leq 2$, yielding a contradiction. Therefore, ${\rm diam}(\overline{\Bbb{APOG}}(M_{n}(R))=3$ and we are done. \hfill $\square$
\end{pproof}
\\

It was shown in Corollary \ref{diam3} that ${\rm gr}(\overline{\Bbb{APOG}}(R)) \leq 4$. We now show that ${\rm gr}(\overline{\Bbb{APOG}}(M_{n}(R)))=3$ where $n\geq 2$.

\begin{ppro}
 Let $R$ be a commutative ring. Then ${\rm gr}(\overline{\Bbb{APOG}}(M_{n}(R))=3$ where $n\geq 2$.
\end{ppro}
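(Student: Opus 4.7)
The plan is to exhibit an explicit triangle in $\overline{\Bbb{APOG}}(M_n(R))$. Combined with the trivial lower bound (any simple graph containing a cycle has girth at least $3$), this will force equality.

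For $n\ge 2$, I will take as the three candidate vertices the subsets
\[
A = RE_{11},\qquad B = RE_{12},\qquad C = RE_{22}
\]
of $M_n(R)$, where $E_{ij}$ denotes the standard matrix unit and $RE_{ij}=\{rE_{ij}:r\in R\}$. They are pairwise distinct and non-zero whenever $R$ is a non-trivial ring with $1$. The first thing to check is that each $RE_{ij}$ is genuinely a vertex of $\overline{\Bbb{APOG}}(M_n(R))$, i.e.\ a non-zero element of $\Bbb{IPO}(M_n(R))$. For this I will use the identity
\[
RE_{ij} = \bigl(E_{ii}M_n(R)\bigr)\bigl(M_n(R)E_{jj}\bigr),
\]
which writes $RE_{ij}$ as a product of a right ideal and a left ideal of $M_n(R)$; it follows from the short computation $E_{ii}XYE_{jj}=(XY)_{ij}E_{ij}$ together with the observation that, since $R$ is commutative with $1$, the entry $(XY)_{ij}$ ranges over all of $R$ as $X,Y$ vary in $M_n(R)$.

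With $\Bbb{IPO}$-membership in hand, pairwise adjacency is immediate from the matrix-unit rule $E_{ij}E_{kl}=\delta_{jk}E_{il}$: the products $RE_{11}\cdot RE_{22}$, $RE_{12}\cdot RE_{11}$ and $RE_{22}\cdot RE_{12}$ all vanish (because $E_{11}E_{22}$, $E_{12}E_{11}$ and $E_{22}E_{12}$ are each zero), so $A$–$C$, $B$–$A$ and $C$–$B$ are edges in the undirected graph $\overline{\Bbb{APOG}}(M_n(R))$. Hence $A$–$B$–$C$–$A$ is a $3$-cycle, which yields $\mathrm{gr}(\overline{\Bbb{APOG}}(M_n(R)))\le 3$, and the matching lower bound $\mathrm{gr}\ge 3$ is automatic for any simple graph.

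I do not foresee a serious obstacle. The only step that requires a moment of care, rather than direct quotation of $E_{ij}E_{kl}=\delta_{jk}E_{il}$, is the $\Bbb{IPO}$-membership of $RE_{ij}$: the set $RE_{ij}$ is not itself a one-sided ideal of $M_n(R)$, so one genuinely has to write it as the above product of a right ideal and a left ideal before declaring it a vertex of the annihilating-ideal graph.
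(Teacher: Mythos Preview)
Your argument is correct. The identity $RE_{ij}=(E_{ii}M_n(R))(M_n(R)E_{jj})$ is verified exactly as you indicate, so each $RE_{ij}$ lies in $\Bbb{IPO}(M_n(R))$; the three sets $RE_{11},RE_{12},RE_{22}$ are visibly distinct and nonzero, and the matrix-unit relations $E_{11}E_{22}=E_{12}E_{11}=E_{22}E_{12}=0$ give the required triangle. Nothing is missing.

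Your route, however, is not the one the paper takes. The paper produces its triangle from the two-sided ``corner'' sets $AM_n(R)A$, $BM_n(R)B$, $CM_n(R)C$ for the specific matrices
\[
A=E_{11}+E_{12},\qquad B=E_{11}-E_{12}-E_{21}+E_{22},\qquad C=E_{12}+E_{22},
\]
using the vanishing of $AB$, $BC$, $CA$. That construction has the mild advantage that each vertex is manifestly of the form $(\text{right ideal})(\text{left ideal})$ without any side computation, but it requires checking that the three corner sets are nonzero and pairwise distinct (which involves a small case analysis, e.g.\ in characteristic~$2$). Your matrix-unit triangle is more transparent: distinctness and nonvanishing are immediate, and the only nontrivial step is the $\Bbb{IPO}$-membership identity you isolate. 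Either approach proves the proposition; yours is the more economical of the two.
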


\begin{pproof}
Let
$$
A=\begin{bmatrix}
1 & 1 & 0 & \cdots & 0\\
0 & 0 & 0 & \cdots & 0\\
\vdots & \vdots & \vdots & \cdots & \vdots \\
0 & 0 & 0 & \cdots & 0
\end{bmatrix}
,
B=\begin{bmatrix}
1 & -1 & 0 & \cdots & 0\\
-1 & 1 & 0 & \cdots & 0\\
\vdots & \vdots & \vdots & \cdots & \vdots \\
0 & 0 & 0 & \cdots & 0
\end{bmatrix}
,$$ and
$$
C=\begin{bmatrix}
0 & 1 & 0 & \cdots & 0\\
0 & 1 & 0 & \cdots & 0\\
\vdots & \vdots & \vdots & \cdots & \vdots \\
0 & 0 & 0 & \cdots & 0
\end{bmatrix}.$$
Then $(AM_{n}(R)A) - (BM_{n}(R)B) - (CM_{n}(R)C)$ is a cycle in $(\overline{\Bbb{APOG}}(M_{n}(R))$, so ${\rm gr}(\overline{\Bbb{APOG}}(M_{n}(R))=3$.\hfill $\square$
\end{pproof}

Farid Aliniaeifard, Department of Mathematics, Brock University, St. Catharines, Ontario, Canada L2S 3A1. Tel: (905) 688-5550 and Fax:(905) 378-5713. E-mail address: fa11da@brocku.ca
\\

Mahmood Behboodi, Department of Mathematical of Sciences, Isfahan University of Technology, Isfahan, Iran 84156-8311. Tel : (+98)(311) 391-3612 and Fax : (+98)(311) 391-3602. E-mail address: mbehbood@cc.iut.ac.ir
\\

Yuanlin Li, Department of Mathematics, Brock University, St. Catharines, Ontario, Canada L2S 3A1. Tel: (905) 688-5550 ext. 4626  and Fax:(905) 378-5713. E-mail address: yli@brocku.ca

\end{document}